\newtheorem{prethm}{{\bf Theorem}}
\newenvironment{thm}{\begin{prethm}{\hspace{-0.5
               em}{\bf.}}}{\end{prethm}}
\newtheorem{prepro}[prethm]{{\bf Theorem}}
\newtheorem{preprop}[prethm]{{\bf Proposition}}
\newtheorem{precor}[prethm]{{\bf Corollary}}
\newenvironment{cor}{\begin{precor}{\hspace{-0.5
               em}{\bf.}}}{\end{precor}}
\newtheorem{preconj}[prethm]{{\bf Conjecture}}
\newtheorem{preremark}[prethm]{{\bf Remark}}
\newenvironment{remark}{\begin{preremark}\rm{\hspace{-0.5
               em}{\bf.}}}{\end{preremark}}
\newtheorem{preexample}[prethm]{{\bf Example}}
\newenvironment{example}{\begin{preexample}\rm{\hspace{-0.5
               em}{\bf.}}}{\end{preexample}}
\newtheorem{preques}[prethm]{{\bf Question}}
\newtheorem{prelem}[prethm]{{\bf Lemma}}
\newenvironment{lem}{\begin{prelem}{\hspace{-0.5
               em}{\bf.}}}{\end{prelem}}
\newtheorem{prelam}{{\bf Lemma}}
\newtheorem{preproof}{{\bf Proof.}}
\newenvironment{proof}[1]{\begin{preproof}{\rm
               #1}\hfill{$\Box$}}{\end{preproof}}
\title{\bf \large  On the Cayley graph of a commutative ring \\with respect to its zero-divisors
\thanks
{{\it Key Words}: Zero-divisor, Minimal prime ideal, Zero-dimensional ring, Chromatic number, Clique
number, Connectivity.}
\thanks {2010{ \it Mathematics Subject Classification}: 05C15, 05C25, 05C40, 05C69, 16N40.}}
\author{{\normalsize { G. Aalipour${}^{\mathsf{a,b}}$} and { S. Akbari${}^{\mathsf{a,b}}$}}\vspace{3mm}\\
{\footnotesize{${}^{\mathsf{a}}$ Department of Mathematical
 Sciences, Sharif University of Technology, Tehran, Iran}}\\
{\footnotesize{${}^{\mathsf{b}}$ School of Mathematics, Institute
for Research in Fundamental Sciences, (IPM),}}\\
{\footnotesize{}P.O. Box 19395-5746, Tehran, Iran}\\\\
{\footnotesize{$\mathsf{alipour\_ghodrat@mehr.sharif.ir}$\quad\quad
$\mathsf{s\_akbari@sharif.edu}$\quad\quad}}}
\date{}
\begin{document}
\maketitle
\begin{abstract}
{\small Let  $R$  be a commutative ring with unity and $R^{+}$ and $Z^*(R)$ be the additive group and the set of all non-zero zero-divisors of $R$, respectively. We denote by $\mathbb{CAY}(R)$ the Cayley graph $Cay(R^{+},Z^*(R))$. In this paper, we study $\mathbb{CAY}(R)$. Among other results, it is shown that for every zero-dimensional non-local ring $R$, $\mathbb{CAY}(R)$ is a connected graph of diameter $2$. Moreover, for a finite ring $R$, we obtain the vertex connectivity and the edge connectivity of $\mathbb{CAY}(R)$. We investigate rings $R$ with perfect $\mathbb{CAY}(R)$ as well. 
We also study $Reg(\mathbb{CAY}(R))$ the induced subgraph on the regular elements of $R$. This graph gives a family of  vertex transitive graphs. We show that if $R$ is a Noetherian ring and $Reg(\mathbb{CAY}(R))$ has no infinite clique, then $R$ is finite. Furthermore, for every finite ring $R$, the clique number and the chromatic number of $Reg(\mathbb{CAY}(R))$ are determined.}
\end{abstract}
\vspace{4mm} \noindent{\bf\large 1. Introduction}\vspace{4mm}\\
{Throughout this paper, all rings are assumed to be
commutative with unity. Let $R$ be a ring with the additive group $R^+$. We denote by $U(R)$, $Z(R)$, $Z^*(R)$, $Reg(R)$, ${\rm{Min}}(R)$, ${\rm{Spec}}(R)$ and ${\rm{Max}}(R)$,
the set of invertible elements, zero-divisors, non-zero zero-divisors, regular elements, minimal prime ideals,
prime ideals and maximal ideals of $R$,
respectively. The {\it{Jacobson radical}} and the {\it{nilradical}}
of $R$ are denoted by $J(R)$ and $Nil(R)$, respectively. The ring
$R$ is said to be \textit{reduced} if it has no non-zero nilpotent
element.  For a subset $X$ of $R$, by $(X)$, we mean \textit{the ideal generated by} $X$. The \textit{Krull dimension} of $R$ is denoted by $dim(R)$. 
By $T(R)$, we mean the {\it{total ring}} of $R$
that is the ring of fractions of $R$ with respect to $R\setminus Z(R)$. A {\it{local ring}} is a ring with exactly one maximal ideal. A ring with finitely many maximal ideals is called a {\it{semi-local ring}}. A ring $R$ is said to
be a {\it{von Neumann regular ring}}, if for every $x\in R$, there exists $y\in R$ such that $x=x^2y$. The set of \textit{associated prime ideals of  $R$-module $R$} is denoted by ${\rm Ass}(R)=\{\mathfrak{p}\in {\rm{Spec}}(R)\, :\, \mathfrak{p}=Ann(x),\ \text{for some}\ x\in R\}$, where $Ann(x)=\{y\in R\,:\, xy=0\}$. For classical theorems and notations in commutative algebra, the interested reader is referred to \cite{ati} and \cite{sharp}.

Let $G$ be a graph with the vertex set $V(G)$. The \textit{complement} of $G$ is denoted by $\overline{G}$. 
For two vertices $x$ and $y$ in a graph $G$, a walk from $x$ to $y$ is a sequence $xe_1v_1\cdots v_{l-1}e_ly$, whose terms are alternately vertices and edges of $G$ (not necessarily distinct). We denote this walk by $x-\hspace{-.2cm}-v_1-\hspace{-.2cm}-\cdots -\hspace{-.2cm}-v_{l-1}-\hspace{-.2cm}-y$. The vertices $v_1,\ldots,v_{l-1}$ are called  \textit{internal vertices}. We say that two walks (paths) from $x$ to $y$ in a graph $G$ are \textit{vertex internally disjoint} if they share no common internal vertex. 
If $G$ is connected, then we mean by  $diam(G)$ and $d(x,y)$, the {\textit{diameter}} of $G$ and the \textit{distance} between two vertices $x$ and $y$. If $G$ is not connected, then $diam(G)$ is defined to be $\infty$. We denote by $K_n$ the \textit{complete graph} of
order $n$. The \textit{union} of two simple graphs $G$ and $H$ is the graph $G\cup H$ with the vertex set $V(G)\cup V (H)$ and the edge set $E(G)\cup E(H)$. If $G$ and $H$ are disjoint, we refer to their union as a \textit{disjoint union}, and generally denote it by $G+H$. The disjoint union of $n$ copies of $G$ is denoted by $nG$. We denote the {\textit {Cartesian product}} of two graphs $G$ and $H$ by  $G\Box H$. The \textit{direct product} (sometimes called \textit{Kronecker product} or \textit{tensor product}) of two graphs $G$ and $H$, denoted by $G\times H$, is a graph with the vertex set $V(G)\times V(H)$ and two distinct vertices $(x_1,y_1)$ and $(x_2,y_2)$ are adjacent if and only if $x_1$ and $x_2$ are adjacent in $G$ and $y_1$ and $y_2$ are adjacent in $H$.
A \textit{clique} in a graph $G$ is a subset of pairwise adjacent vertices. The supremum of the size of cliques in $G$, denoted by
$\omega(G)$, is called the \textit{clique number} of $G$. By
$\chi(G)$, we denote \textit{the chromatic number} of $G$ i.e. the
minimum number of colors which can be assigned to the vertices of
$G$ in such a way that every two adjacent vertices have different
colors. A coloring of the vertices such that any two adjacent vertices have different colors is called a \textit{proper vertex coloring}. A graph $G$ is called $k$-\textit{vertex colorable} if $G$ has a proper vertex coloring with $k$ colors. A graph $G$ is called \textit{perfect} if and only if for every finite induced subgraph $H$ of $G$, $\chi(H)=\omega(H)$. For $x\in V(G)$ we denote by $N(x)$ the set of all vertices of $G$ adjacent to $x$. Also, the size of $N(x)$ is denoted by $d(x)$ and is called the {\it{degree}} of $x$. The minimum degree of $G$ is denoted by $\delta(G)$. 
A graph is called \emph{$k$-regular}, if all its vertices have degree $k$. By $N[x]$, we mean $N(x)\cup \{x\}$. We denote by $\kappa(G)$ and $\kappa'(G)$, the \textit{vertex connectivity} and the \textit{edge connectivity} of $G$, respectively. For the definitions, see \cite{bondy}. 
A graph $G$ is called \textit{vertex transitive} (\textit{edge transitive}) if for every two vertices $x$ and $y$ (two edges $e$ and $e'$)  there exists $\rho\in Aut(G)$ such that $\rho(x)=y$ ($\rho(e)=e'$).

Let $G$ be a group with identity element $e$ and $\Omega$ be a non-empty subset of $G$ such that $e\notin \Omega$ and for every $g\in \Omega$, $g^{-1}\in \Omega$. The Cayley graph $Cay(G,\Omega)$ is a simple graph with the vertex set $G$ and two vertices $g$ and $h$ are adjacent if and only if $g^{-1}h\in \Omega$. A \textit{circulant graph} is a Cayley graph $Cay(\mathbb{Z}_n^+,\Omega)$, for some $\Omega\subseteq \mathbb{Z}_n\setminus\{0\}$ with property $\Omega=\{-x\,:\, x\in\Omega\}$.

Let $n$ be a positive integer, $\mathcal{D}=\{d\,:\, 1\leq d\leq n-1\, ,\, d\mid n\}$ and $T$ be a subset of $\mathcal{D}$. The \textit{gcd-graph} $X_n(T)$ has vertices $0,\ldots,n-1$ and two vertices $x$ and $y$ are adjacent if and only if $gcd(x-y,n)\in T$. The concept of gcd-graphs was first introduced by Klotz and Sander, see \cite{klotz-sander}. In \cite{so}, it is shown that integral circulant graphs are exactly the gcd-graphs. 
For more information on gcd-graphs, we refer the reader to \cite{Basic-Illic-clique}, \cite{Basic-Illic-chromatic}, \cite{klotz-sander} and \cite{so}.

The gcd-graph $X_n(\{1\})$ is called \textit{the unitary Cayley graph}, see \cite{fuchs} and \cite{klotz-sander} and references therein. In \cite{fuchs}, the unitary Cayely graph of a commutative ring $R$ is defined as $G_R=Cay(R^+,U(R))$. For more information on $G_R$, we refer the reader to \cite{akhtar}, \cite{kiani} and \cite{liu-zhou}. It is clear that $G_{\mathbb{Z}_n}\cong X_n(\{1\})$.

It is obvious that every gcd-graph $X_n(T)$ with the property $1\in T$, is of the form $X_n(\{1\})\cup X_n(T\setminus \{1\})$ and the gcd-graph $X_n(T)$ with $1\notin T$ is a subgraph of $X_n(\mathcal{D}\setminus \{1\})$, the complement of the unitary Cayley graph $X_n(\{1\})$. 
Aleksandar Ili$\text{\v{c}}$ in \cite{ilic}, determined the energy of $X_n(\mathcal{D}\setminus \{1\})$ and proved that this graph is hyperenergetic if and only if $n$ has at least two distinct prime factors and $n\neq2p$, where $p$ is a prime number. By generalizing the definition of $X_n(\mathcal{D}\setminus \{1\})\cong Cay(\mathbb{Z}_n^+, Z^*(\mathbb{Z}_n))$ to a commutative ring $R$, we study more properties of $X_n(\mathcal{D}\setminus \{1\})$. This generalization can be simply done by $\mathbb{CAY}(R)=Cay(R^+,Z^*(R))$, a graph whose vertices are elements of  $R$ and in which two distinct vertices  $x$  and  $y$  are joined by an edge if and only if $x - y\in Z(R)$. In Figure $1$, $\mathbb{CAY}(\mathbb{Z}_2\times\mathbb{Z}_2\times\mathbb{Z}_2)$ and $\mathbb{CAY}(\mathbb{Z}_6)$ are shown.
\\
\centerline{\includegraphics[scale=.3, angle=90, trim = 6cm 2cm 6cm 3cm, clip]{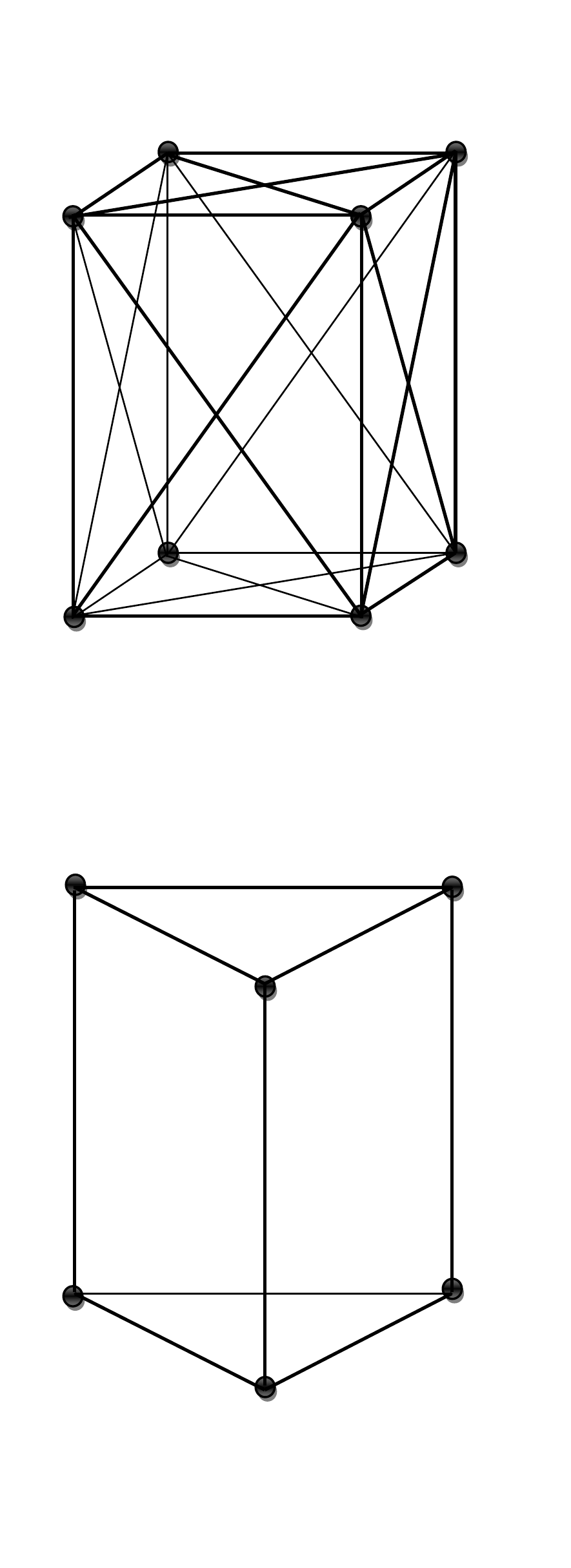}}\\
\centerline{$\text{\bf Figure 1. }$ $\mathbb{CAY}(\mathbb{Z}_2\times\mathbb{Z}_2\times\mathbb{Z}_2)$ and $\mathbb{CAY}(\mathbb{Z}_6)$}
\\

As a graph associated to a commutative ring $R$, the \textit{total graph} of $R$, denoted by $T(\Gamma(R))$, is a graph with the vertex set $R$ such that two distinct vertices $x$ and $y$ are adjacent if and only if $x+y\in Z(R)$. 
The authors in \cite{totalgraph} have studied $T(\Gamma(R))$ and two of its subgraphs $Reg(\Gamma(R))$ and $Z(\Gamma(R))$, the induced subgraphs of $T(\Gamma(R))$ on $Reg(R)$ and $Z(R)$, respectively. However, $T(\Gamma(R))$ and $\mathbb{CAY}(R)$ are close in definition but sometimes they have different properties. For instance, the total graph of a finite ring $R$ can be biregular i.e. with exactly two distinct degrees but  we will see that $\mathbb{CAY}(R)$ is always regular. In \cite{shekarriz}, all finite rings $R$ such that $\mathbb{CAY}(R)\cong T(\Gamma(R))$ are characterized. 

For a commutative Noetherian ring $R$, the chromatic number of $\mathbb{CAY}(R)$, as a simple graph associated with a commutative ring, was independently studied in \cite{japanese}. Some properties of $\mathbb{CAY}(R)$ such as clique number, independence number, domination number, girth, strongly regularity and edge transitivity have been  studied in \cite{Aali}. In  \cite{kiani}, Kiani et al. obtain eigenvalues of $\mathbb{CAY}(R)=\overline{G_R}$ as integers and compute the energy of $\mathbb{CAY}(R)$, for a finite ring $R$. In \cite{liu-zhou} among other results, the authors  characterize all finite rings $R$ such that $\mathbb{CAY}(R)$ is a Ramanujan graph.  In this paper, we continue studying $\mathbb{CAY}(R)$. We also introduce $Reg(\mathbb{CAY}(R))$ the induced subgraph of $\mathbb{CAY}(R)$ on $Reg(R)$. This graph gives a family of vertex transitive graphs. We first present some elementary results on $\mathbb{CAY}(R)$.
\begin{lem}\label{khavaseebtedaei}
Let $R$ be a ring. Then the following statements hold:

\noindent {\rm{(i)}} $\mathbb{CAY}(R)$ has no edge if and only if $R$ is an integral domain,\\
{\rm{(ii)}} If $(R,\mathfrak{m})$ is an Artinian local ring, then $\mathbb{CAY}(R)$ is a disjoint union of $|\frac{R}{\mathfrak{m}}|$
copies of the complete graph $K_{|\mathfrak{m}|}$,\\
{\rm{(iii)}} $\mathbb{CAY}(R)$ cannot be a complete graph,\\
{\rm{(iv)}} $\mathbb{CAY}(R)$ is vertex transitive,\\
{\rm{(v)}} $\mathbb{CAY}(R)$ is a regular graph of degree $|Z(R)|-1$ with isomorphic components.
\end{lem}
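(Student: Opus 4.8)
The plan is to treat the five parts largely independently, using throughout the single adjacency rule that two distinct vertices $x$ and $y$ are joined in $\mathbb{CAY}(R)=Cay(R^+,Z^*(R))$ precisely when $x-y\in Z^*(R)$, i.e. when $x-y$ is a non-zero zero-divisor. This reduces each statement to a short translation between graph-theoretic and ring-theoretic language.

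For (i) I would note that an edge exists if and only if $Z^*(R)\neq\emptyset$: any $a\in Z^*(R)$ yields the edge joining $0$ and $a$, while conversely any edge $\{x,y\}$ produces $x-y\in Z^*(R)$. Since $R$ is an integral domain exactly when it has no non-zero zero-divisors, the equivalence is immediate. Part (iii) is equally direct: since $1$ is a unit it is not a zero-divisor (if $1\cdot v=0$ then $v=0$), so $1-0=1\notin Z^*(R)$, whence $0$ and $1$ are non-adjacent and the graph cannot be complete (here I use $1\neq0$).

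For (iv) I would invoke the standard fact that every Cayley graph is vertex transitive: for each $a\in R$ the translation $\rho_a\colon x\mapsto x+a$ is a graph automorphism, because $(x+a)-(y+a)=x-y$ preserves adjacency, and $\rho_{y-x}$ carries $x$ to $y$. Part (v) then follows quickly. Vertex transitivity forces regularity, and the degree is read off at the vertex $0$, whose neighbours are exactly the $y$ with $-y\in Z^*(R)$; as $Z^*(R)=-Z^*(R)$ this neighbour set equals $Z^*(R)$, of size $|Z(R)|-1$ since $Z(R)=Z^*(R)\cup\{0\}$. The isomorphism of components is again a consequence of vertex transitivity: given components $C$ and $C'$ with $x\in C$ and $y\in C'$, the automorphism $\rho_{y-x}$ maps $C$ isomorphically onto $C'$.

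The one part requiring genuine ring-theoretic input, and the step I expect to be the main obstacle, is (ii). Here the key is to identify the zero-divisors of an Artinian local ring $(R,\mathfrak{m})$ with $\mathfrak{m}$ itself: every element outside $\mathfrak{m}$ is a unit and hence regular, giving $Z(R)\subseteq\mathfrak{m}$, while in an Artinian ring $\mathfrak{m}$ is nilpotent, so every element of $\mathfrak{m}$ is nilpotent and therefore zero or a zero-divisor, giving the reverse inclusion. With $Z^*(R)=\mathfrak{m}\setminus\{0\}$ in hand, adjacency of $x$ and $y$ becomes equivalent to $x-y\in\mathfrak{m}$ with $x\neq y$, that is, to lying in a common coset of $\mathfrak{m}$. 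Thus the components of $\mathbb{CAY}(R)$ are exactly the cosets of $\mathfrak{m}$, each a complete graph on $|\mathfrak{m}|$ vertices, and there are $|R/\mathfrak{m}|$ of them, which is the claimed decomposition.
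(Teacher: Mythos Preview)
Your proof is correct and follows the same approach as the paper, which dispatches (i) and (iii) as obvious, derives (ii) from the identity $Z(R)=\mathfrak{m}$, notes (iv) holds for all Cayley graphs, and deduces (v) from vertex transitivity. You have simply supplied the details the paper omits, in particular the justification of $Z(R)=\mathfrak{m}$ via nilpotency of $\mathfrak{m}$ in the Artinian local case.
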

\begin{proof}
{Parts (i) and (iii) are obvious. Part (ii) follows from $Z(R)=\mathfrak{m}$. Part (iv) holds for every Cayley graph of a group. To prove the last part, note that under an automorphism of graph $G$, any component of $G$ is
isomorphically mapped to another component. Since $\mathbb{CAY}(R)$ is vertex-transitive, we conclude that the components of $\mathbb{CAY}(R)$ are isomorphic and so (v) is proved.}
\end{proof}
\begin{remark}
$\mathbb{CAY}(R)$ is vertex transitive but it is not necessarily edge transitive. To see this, consider $\mathbb{CAY}(\mathbb{Z}_6)\cong K_2\Box K_3$ which is not edge transitive (see Figure~$1$).
\end{remark}
\vspace{4mm} \noindent{\bf\large 2. The Connectivity of the Cayley Graph of a Ring}\vspace{4mm}\\
In this section, we study the connectivity of $\mathbb{CAY}(R)$. One of the main results of this section is: for every zero-dimensional non-local ring $R$, $\mathbb{CAY}(R)$ is a connected graph with diameter $2$. To prove this, we first need the following results.

The following lemma has a key role in our proofs. This lemma implies that every element of each minimal prime ideal of a ring is a zero-divisor, see \cite[Theorem 84]{kaplansky}.
\begin{lem}\label{minimalozerodivisors}
Let $R$ be a ring. Then the following statements hold:

\noindent {\rm{(i)}} If ${\mathfrak{p}}\in {\rm Min}(R)$ and $a\in \mathfrak{p}$, then there exists $b\in R\setminus \mathfrak{p}$ such that $ba^i=0$, for some $i\in \mathbb{N}$. In particular, $\bigcup_{{\mathfrak{p}}\in {\rm Min}(R)} {\mathfrak{p}} \subseteq Z(R)$.\\
{\rm{(ii)}} If $R$ is reduced, then $\bigcup_{{\mathfrak{p}}\in {\rm Min}(R)} {\mathfrak{p}}=Z(R)$.
\end{lem}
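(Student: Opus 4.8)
The plan is to establish (i) by localizing at the minimal prime $\mathfrak{p}$, and then to derive (ii) from (i) together with the standard fact that in a reduced ring the intersection of the minimal primes is zero.

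For (i), I would pass to the localization $R_{\mathfrak{p}}$. Since $\mathfrak{p}$ is a \emph{minimal} prime, the primes of $R_{\mathfrak{p}}$ correspond to the primes of $R$ contained in $\mathfrak{p}$, so $\mathfrak{p}R_{\mathfrak{p}}$ is the unique prime of $R_{\mathfrak{p}}$; being the only prime, it coincides with the nilradical $Nil(R_{\mathfrak{p}})$. Hence the image of $a$ in $R_{\mathfrak{p}}$ lies in $\mathfrak{p}R_{\mathfrak{p}} = Nil(R_{\mathfrak{p}})$ and is therefore nilpotent, say $\frac{a^i}{1}=0$ in $R_{\mathfrak{p}}$ for some $i\in\mathbb{N}$. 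By the very definition of localization, $\frac{a^i}{1}=0$ means that there is some $b\in R\setminus\mathfrak{p}$ with $ba^i=0$ in $R$, which is the required element. For the ``in particular'' clause, note that $b\notin\mathfrak{p}$ forces $b\neq 0$ (since $0\in\mathfrak{p}$). Choosing $i$ minimal with $ba^i=0$, the element $c:=ba^{i-1}$ is nonzero (by minimality, with $a^0=1$) and satisfies $ca=0$, exhibiting $a$ as a zero-divisor. Taking the union over all $\mathfrak{p}\in{\rm Min}(R)$ yields $\bigcup_{\mathfrak{p}\in{\rm Min}(R)}\mathfrak{p}\subseteq Z(R)$.

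For (ii), only the reverse inclusion $Z(R)\subseteq\bigcup_{\mathfrak{p}}\mathfrak{p}$ needs the reduced hypothesis. Let $a\in Z(R)$, so $ab=0$ for some $b\neq 0$. Because $R$ is reduced, $\bigcap_{\mathfrak{p}\in{\rm Min}(R)}\mathfrak{p}=Nil(R)=0$, and since $b\neq 0$ there is a minimal prime $\mathfrak{p}_0$ with $b\notin\mathfrak{p}_0$. From $ab=0\in\mathfrak{p}_0$ and primeness of $\mathfrak{p}_0$ we conclude $a\in\mathfrak{p}_0$, so $a\in\bigcup_{\mathfrak{p}}\mathfrak{p}$. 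Combined with (i), this gives the claimed equality.

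The main point to get right is the localization step in (i): specifically the identification $\mathfrak{p}R_{\mathfrak{p}}=Nil(R_{\mathfrak{p}})$, which is exactly where minimality of $\mathfrak{p}$ is used, and the careful translation of nilpotency in $R_{\mathfrak{p}}$ back into an annihilation relation $ba^i=0$ in $R$ with $b\notin\mathfrak{p}$. Once that is in hand, the passage to zero-divisors via the minimal exponent and the reduced-ring argument in (ii) are routine.
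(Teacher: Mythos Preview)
Your proof is correct. Part (ii) is essentially identical to the paper's argument. For part (i), however, you take a slightly different route: you localize at $\mathfrak{p}$, identify $\mathfrak{p}R_{\mathfrak{p}}$ with $Nil(R_{\mathfrak{p}})$ via the prime correspondence, and then translate nilpotency of $a/1$ back to $R$. The paper instead argues directly with multiplicatively closed sets: setting $U=R\setminus\mathfrak{p}$, $V=\{1,a,a^2,\ldots\}$, and $S=UV$, it shows $0\in S$ by contradiction, since a prime disjoint from $S$ would be contained in $\mathfrak{p}$ yet miss $a$, hence be strictly smaller, violating minimality. The two arguments are really the same idea in different clothing (your localization at $U$ encodes exactly the paper's multiplicative set), but the paper's version is marginally more elementary in that it avoids the machinery of localization and the prime-correspondence theorem, while yours is conceptually cleaner and makes the role of minimality transparent through the structure of $R_{\mathfrak{p}}$.
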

\begin{proof}
{Let $U=R\setminus \mathfrak{p}$, $V=\{1,a,a^2,\ldots \}$ and $S=UV$. Since $ \mathfrak{p}$ is a prime ideal, $U$ is a multiplicatively closed subset of $R$. So, $S$ is a multiplicatively closed subset of $R$. If $0 \notin S$, then by \cite[Theorem 3.44]{sharp}, there exists $\mathfrak{q}\in {\rm Spec}(R)$ such that $\mathfrak{q}\cap S=\varnothing$. This
yields that $\mathfrak{q}\subseteq R\setminus S\subseteq R\setminus U=\mathfrak{p}$. Thus, $\mathfrak{q}\subseteq\mathfrak{p}$. Since $a\in \mathfrak{p}\cap S$, we conclude that $\mathfrak{q}\subsetneq \mathfrak{p}$. This is a contradiction because ${\mathfrak{p}}\in {\rm Min}(R)$. So, we can assume
that $0 \in S$. Thus, there is a natural number $i$ such that $ba^i=0$,
for some $b\in R\setminus \mathfrak{p}$. This implies that $a\in Z(R)$ and the proof of Part (i) is complete. 
Now, we prove Part (ii). If $R$ is a reduced ring, then by \cite[Corollary 3.54]{sharp}, $\bigcap_{{\mathfrak{p}}\in {\rm Min}(R)} {\mathfrak{p}}=Nil(R)=\{0\}$. Therefore, for every $x\in Z(R)$, there exists ${\mathfrak{p}}\in {\rm Min}(R)$ such that $Ann(x)\nsubseteq {\mathfrak{p}}$. Thus, $x\in {\mathfrak{p}}$ and using Part (i), the proof is complete.}
\end{proof}

Using Part (i) of Lemma \ref{minimalozerodivisors}, we have the following corollary \cite[Theorem 91]{kaplansky}.
\begin{cor}\label{zerodivisorsofzero-dimensionals}
Let $R$ be a zero-dimensional ring. Then $Z(R)$ is the union of all maximal ideals of $R$. In particular, $Reg(R)=U(R)$ and $\mathbb{CAY}(R)=\overline{G_R}$.
\end{cor}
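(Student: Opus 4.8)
The plan is to prove the set equality $Z(R)=\bigcup_{{\mathfrak{m}}\in {\rm Max}(R)}{\mathfrak{m}}$ first, and then to read off the two ``in particular'' assertions as quick consequences. For the inclusion $\bigcup_{{\mathfrak{m}}\in {\rm Max}(R)}{\mathfrak{m}}\subseteq Z(R)$, I would use the defining feature of a zero-dimensional ring: since $dim(R)=0$, no prime ideal strictly contains another, so every prime ideal is simultaneously minimal and maximal; that is, ${\rm Max}(R)={\rm Min}(R)$. Hence $\bigcup_{{\mathfrak{m}}\in {\rm Max}(R)}{\mathfrak{m}}=\bigcup_{{\mathfrak{p}}\in {\rm Min}(R)}{\mathfrak{p}}$, and the latter set is contained in $Z(R)$ by the final clause of Lemma~\ref{minimalozerodivisors}(i). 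The reverse inclusion $Z(R)\subseteq \bigcup_{{\mathfrak{m}}\in {\rm Max}(R)}{\mathfrak{m}}$ needs no hypothesis on the dimension and holds in any ring: a zero-divisor $x$ satisfies $xy=0$ for some nonzero $y$, so it cannot be invertible, and every non-unit lies in some maximal ideal. Combining the two inclusions gives the desired equality.

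Next I would deduce $Reg(R)=U(R)$. The inclusion $U(R)\subseteq Reg(R)$ is immediate, since a unit is never a zero-divisor. For the converse, take $x\in Reg(R)$, so that $x\notin Z(R)$; by the equality just established, $x$ then lies outside every maximal ideal, and an element belonging to no maximal ideal is necessarily a unit. Thus $x\in U(R)$.

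Finally, for $\mathbb{CAY}(R)=\overline{G_R}$, I would note that both graphs have vertex set $R$, so it suffices to compare their edges. From $Reg(R)=U(R)$ together with the partition $R=Reg(R)\sqcup Z(R)$ one gets $Z(R)=R\setminus U(R)$, the set of all non-units. Two distinct vertices $x$ and $y$ are adjacent in $\mathbb{CAY}(R)$ exactly when $x-y\in Z^*(R)$, i.e.\ when $x-y$ is a nonzero non-unit; this is precisely the condition that $x$ and $y$ be non-adjacent in $G_R=Cay(R^+,U(R))$, so the edge sets of $\mathbb{CAY}(R)$ and $\overline{G_R}$ coincide.

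I do not expect a serious obstacle here: the one substantive observation is that zero-dimensionality forces maximal ideals to be minimal primes, which is exactly what makes Lemma~\ref{minimalozerodivisors}(i) applicable. The remaining steps are routine set-theoretic bookkeeping, relying only on the standard fact that the non-units of a commutative ring with unity are exactly the elements contained in some maximal ideal.
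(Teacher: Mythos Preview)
Your proposal is correct and follows essentially the same approach as the paper, which simply states that the corollary follows from Part~(i) of Lemma~\ref{minimalozerodivisors} (together with a reference to Kaplansky). You have spelled out the details the paper leaves implicit: that $dim(R)=0$ forces ${\rm Max}(R)={\rm Min}(R)$, so Lemma~\ref{minimalozerodivisors}(i) gives one inclusion, while the other inclusion and the two ``in particular'' statements are routine.
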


The following lemma is obvious and the proof is omitted.
\begin{lem}\label{zerodivisorsareideal}
Let $R$ be a ring such that $Z(R)$ is an ideal of $R$. Then $\mathbb{CAY}(R)$ is a disjoint union of $|\frac{R}{Z(R)}|$ complete graphs $K_{|Z(R)|}$ on the elements of a coset of $Z(R)$.
\end{lem}

So, to argue about the connectedness, we assume that $Z(R)$ is not an ideal.
\begin{thm}\label{diameter}
Let $R$ be a ring. Then $\mathbb{CAY}(R)$ is connected if and only if $R=(Z(R))$. Moreover, if $\mathbb{CAY}(R)$ is connected, then the following are equal:

\noindent {\rm{(i)}}  $d(0,1)$,\\
{\rm{(ii)}} $diam(\mathbb{CAY}(R))$,\\
{\rm{(iii)}} The least integer $n$ such that $1=z_1+\cdots+z_n$, where $z_i \in Z(R)$, for $1\leq i\leq n$.
\end{thm}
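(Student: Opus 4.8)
The plan is to understand the graph $\mathbb{CAY}(R)$ through the additive structure: two vertices $x,y$ are adjacent precisely when $x-y \in Z(R)$, so adjacency depends only on the difference, and a walk $0 - v_1 - \cdots - v_{k-1} - w$ in the graph corresponds to writing $w$ as a sum of consecutive differences, each lying in $Z(R)$. This is the key translation. First I would establish the connectivity criterion. Since $\mathbb{CAY}(R)$ is vertex transitive by Lemma \ref{khavaseebtedaei}(iv), the graph is connected if and only if the component containing $0$ is all of $R$. The neighbors of $0$ reachable by walks are exactly the elements expressible as finite sums $z_1 + \cdots + z_n$ with each $z_i \in Z(R)$; that is, the component of $0$ is the subgroup of $R^+$ generated by $Z(R)$. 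As an additive subgroup generated by an ideal-like set, this generated subgroup is in fact the ideal $(Z(R))$ generated by $Z(R)$, because $Z(R)$ is closed under multiplication by arbitrary ring elements (if $z$ is a zero-divisor then so is $rz$ for any $r$, as $\mathrm{Ann}(z) \neq 0$ forces $\mathrm{Ann}(rz) \supseteq \mathrm{Ann}(z) \neq 0$). Hence the component of $0$ equals $(Z(R))$ as a set, and connectivity is equivalent to $(Z(R)) = R$, i.e. $R = (Z(R))$.

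For the second part, assuming $\mathbb{CAY}(R)$ is connected, I would prove the three quantities in (i), (ii), (iii) coincide by a cyclic chain of inequalities. The equality of (i) and (iii) is almost immediate from the walk-to-sum dictionary: a shortest path from $0$ to $1$ of length $n$ yields differences $z_1,\ldots,z_n \in Z(R)$ summing to $1$, and conversely any representation $1 = z_1 + \cdots + z_n$ builds a path of length $n$ from $0$ to $1$ via partial sums; so $d(0,1)$ equals the least such $n$. Thus (i) $=$ (iii). Trivially $d(0,1) \leq diam(\mathbb{CAY}(R))$, giving (i) $\leq$ (ii). The substantive step is (ii) $\leq$ (i): I must show that $diam(\mathbb{CAY}(R)) \leq d(0,1)$, i.e. that $1$ is a ``hardest'' pair to reach.

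To prove $diam \leq d(0,1)$, take any two vertices $x, y$ and set $w = y - x$. By vertex transitivity the distance $d(x,y)$ equals $d(0,w)$, which by the same dictionary equals the least $n$ with $w = z_1 + \cdots + z_n$, all $z_i \in Z(R)$. Writing $m = d(0,1)$, I have a representation $1 = z_1' + \cdots + z_m'$ with $z_i' \in Z(R)$. The idea is to scale this: multiplying by $w$ gives $w = w z_1' + \cdots + w z_m'$, and each $w z_i'$ lies in $Z(R)$ since $Z(R)$ absorbs multiplication by $R$ (as noted above, $r \cdot Z(R) \subseteq Z(R)$). Therefore $w$ is a sum of $m$ zero-divisors, so $d(0,w) \leq m = d(0,1)$, whence $d(x,y) \leq d(0,1)$ for all $x,y$, giving (ii) $\leq$ (i). Combining, (i) $=$ (ii) $=$ (iii).

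The main obstacle — and the one place to be careful — is the absorption claim $r \cdot Z(R) \subseteq Z(R)$, which is what makes the scaling argument for (ii) $\leq$ (i) work and also identifies the generated subgroup with the ideal $(Z(R))$. One must check the edge case $w z_i' = 0$: a product being zero does not place it in $Z^*(R)$, but it contributes nothing to the sum and can simply be dropped, which only shortens the representation and preserves the inequality $d(0,w) \leq m$ (with the degenerate possibility $w=0$ handled separately, where $x=y$ and there is nothing to prove). With that caveat addressed, the argument is clean and each inequality in the cycle (iii) $=$ (i) $\leq$ (ii) $\leq$ (i) is short.
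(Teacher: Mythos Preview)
Your proposal is correct and follows essentially the same approach as the paper's proof: both translate paths into sums of zero-divisors, and both obtain the key inequality $d(0,w)\le d(0,1)$ by multiplying a minimal representation $1=z_1+\cdots+z_m$ through by $w$ (the paper multiplies the walk, you multiply the sum, which is the same move). Your treatment is in fact slightly more careful than the paper's, since you explicitly address the edge case $wz_i'=0$, which the paper leaves implicit when it passes from the scaled ``walk'' to a path of length at most $m$.
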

\begin{proof}
{The proof is very similar to the proof of \cite[Theorem 3.4]{totalgraph}. Suppose that $\mathbb{CAY}(R)$ is connected. Thus, there exists a path $1-\hspace{-.2cm}-z_1-\hspace{-.2cm}-\cdots-\hspace{-.2cm}-z_n-\hspace{-.2cm}-0$ from $1$ to $0$. Hence, $1=(1-z_1)+(z_1-z_2)+\cdots+z_n \in (Z(R))$.
Conversely, suppose that $1=a_1+\cdots+a_m$, where $a_i\in Z^*(R)$. Thus, $$\sum_{i=1}^m a_i-\hspace{-.2cm}-\sum_{i=1}^{m-1} a_i-\hspace{-.2cm}-\cdots-\hspace{-.2cm}-\sum_{i=1}^2a_i-\hspace{-.2cm}-a_1-\hspace{-.2cm}-0$$ is a walk of length $m$ from $1$ to $0$. Hence, for every vertex $x$ of $R$, 
$$x\sum_{i=1}^m a_i-\hspace{-.2cm}-x\sum_{i=1}^{m-1} a_i-\hspace{-.2cm}-\cdots-\hspace{-.2cm}-x\sum_{i=1}^2a_i-\hspace{-.2cm}-xa_1-\hspace{-.2cm}-0$$ is a walk of length $m$ from $x$ to $0$. So, there exists a path of length at most $m$ between $x$ and $0$. Thus, $\mathbb{CAY}(R)$ is connected. Moreover, $d(0,x)\leq d(0,1)$, for every vertex $x$. This argument also shows that $d(0,1)$ is equal to the least integer $n$ such that $1$ is a sum of $n$ elements of $Z(R)$. On the other hand, by the vertex transitivity of $\mathbb{CAY}(R)$, we conclude that
for every two distinct vertices $u$ and $v$, there exists $\varphi\in Aut(\mathbb{CAY}(R))$ such that $d(u,v)=d(0,\varphi(v))\leq d(0,1)$. Hence, $diam(\mathbb{CAY}(R))=d(0,1)$ and the proof is complete.}
\end{proof}
\begin{example}\label{equalitywithdiameteroftotalgraph}
By the above criterion and \cite[Theorem 3.4]{totalgraph}, we deduce that if $R=(Z(R))$, then $diam(\mathbb{CAY}(R))=diam\big(T(\Gamma(R))\big)$. For every integer $n\geq 2$, \cite[Example 3.8]{totalgraph}, provides a ring $R_n$ whose total graph has diameter $n$. Hence, for every integer $n\geq 2$, $diam(\mathbb{CAY}(R_n))=n$.
\end{example}

The next lemma has a key role in the proof of the main results of this section.
\begin{lem}\label{zerodivisorsofreducedsareshifted}
Let $R$ be a ring and $x\in R$.

\noindent {\rm (i)} If $x+Nil(R)\in Z(\frac{R}{Nil(R)})$, then $x\in Z(R)$. In particular, $diam(\mathbb{CAY}(R))\leq diam(\mathbb{CAY}(\frac{R}{Nil(R)}))$.\\
{\rm (ii)} Let $dim(R)=0$ or $R$ be a Noetherian ring with ${\rm Min}(R)={\rm Ass}(R)$. If $x\in Z(R)$, then $x+Nil(R)\in Z(\frac{R}{Nil(R)})$. In particular, $diam(\mathbb{CAY}(\frac{R}{Nil(R)}))= diam(\mathbb{CAY}(R))$.
\end{lem}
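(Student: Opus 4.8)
The plan is to establish the two inclusion statements about zero-divisors first, and then read off the diameter statements from the characterization in Theorem~\ref{diameter}. Throughout write $\overline{R}=R/Nil(R)$ and $\overline{x}=x+Nil(R)$. For part (i), suppose $\overline{x}\in Z(\overline{R})$, so there is $y\in R$ with $y\notin Nil(R)$ and $xy\in Nil(R)$. Choosing $n$ with $(xy)^n=x^ny^n=0$ and noting that $y^n\neq 0$ since $y$ is not nilpotent, I would take the least $k\geq 1$ with $x^ky^n=0$. Minimality gives $x^{k-1}y^n\neq 0$, while $x\cdot(x^{k-1}y^n)=x^ky^n=0$, so $x$ annihilates a nonzero element and $x\in Z(R)$. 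This little descent on powers is the only genuine computation in the lemma.

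For the diameter inequality in (i), I would invoke Theorem~\ref{diameter}. If $\mathbb{CAY}(\overline{R})$ is disconnected the bound is vacuous, so assume it is connected of diameter $d$ and write $\overline{1}=\overline{z_1}+\cdots+\overline{z_d}$ with each $\overline{z_i}\in Z(\overline{R})$. Lifting to $R$ gives $1-(z_1+\cdots+z_d)=\eta\in Nil(R)$, and by part (i) every $z_i\in Z(R)$. The trick is to absorb the nilpotent error into one summand: since $\overline{z_d+\eta}=\overline{z_d}\in Z(\overline{R})$, part (i) gives $z_d+\eta\in Z(R)$, whence $1=z_1+\cdots+z_{d-1}+(z_d+\eta)$ writes $1$ as a sum of $d$ zero-divisors of $R$. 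By Theorem~\ref{diameter} this shows $\mathbb{CAY}(R)$ is connected with $diam(\mathbb{CAY}(R))\leq d=diam(\mathbb{CAY}(\overline{R}))$.

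The crux of part (ii) is the reduction to showing $Z(R)=\bigcup_{\mathfrak{p}\in {\rm Min}(R)}\mathfrak{p}$ under either hypothesis. If $dim(R)=0$, then every prime ideal is simultaneously minimal and maximal, so Corollary~\ref{zerodivisorsofzero-dimensionals} yields $Z(R)=\bigcup_{\mathfrak{m}\in {\rm Max}(R)}\mathfrak{m}=\bigcup_{\mathfrak{p}\in {\rm Min}(R)}\mathfrak{p}$; if $R$ is Noetherian with ${\rm Min}(R)={\rm Ass}(R)$, then the standard identity $Z(R)=\bigcup_{\mathfrak{p}\in {\rm Ass}(R)}\mathfrak{p}$ gives the same conclusion. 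Now given $x\in Z(R)$, pick $\mathfrak{p}_0\in {\rm Min}(R)$ with $x\in\mathfrak{p}_0$. Since $Nil(R)=\bigcap_{\mathfrak{p}\in {\rm Min}(R)}\mathfrak{p}\subseteq\mathfrak{p}_0$, the image $\mathfrak{p}_0/Nil(R)$ is a minimal prime of $\overline{R}$ containing $\overline{x}$, and Lemma~\ref{minimalozerodivisors}(i) applied to $\overline{R}$ shows $\mathfrak{p}_0/Nil(R)\subseteq Z(\overline{R})$, so $\overline{x}\in Z(\overline{R})$. For the diameter equality, part (i) already supplies $diam(\mathbb{CAY}(R))\leq diam(\mathbb{CAY}(\overline{R}))$; conversely, if $1=z_1+\cdots+z_{d'}$ with $z_i\in Z(R)$ realizes $d'=diam(\mathbb{CAY}(R))$, then the first half of (ii) places each $\overline{z_i}\in Z(\overline{R})$, so $\overline{1}=\overline{z_1}+\cdots+\overline{z_{d'}}$ and Theorem~\ref{diameter} forces $diam(\mathbb{CAY}(\overline{R}))\leq d'$, giving equality (the disconnected case being handled by the inequality from (i)).

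I expect the real obstacle to be part (ii) rather than part (i). A priori $Z(R)$ may strictly contain $\bigcup_{\mathfrak{p}\in {\rm Min}(R)}\mathfrak{p}$ precisely because of embedded, non-minimal associated primes, and it is exactly the two hypotheses---zero-dimensionality, or the vanishing of embedded primes encoded in ${\rm Min}(R)={\rm Ass}(R)$---that collapse $Z(R)$ onto the union of minimal primes. Once that identity is secured, the correspondence between minimal primes of $R$ and of $\overline{R}$ together with Lemma~\ref{minimalozerodivisors}(i) finishes the argument, and the diameter bookkeeping is routine given Theorem~\ref{diameter}.
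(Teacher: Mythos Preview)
Your proof is correct and follows essentially the same approach as the paper's: the descent on powers for part~(i), the use of Theorem~\ref{diameter} for the diameter statements, and the identification $Z(R)=\bigcup_{\mathfrak{p}\in{\rm Min}(R)}\mathfrak{p}$ (via Corollary~\ref{zerodivisorsofzero-dimensionals} or the associated-primes description) followed by Lemma~\ref{minimalozerodivisors} for part~(ii). The only cosmetic differences are that the paper lifts a path in $\mathbb{CAY}(\overline{R})$ directly to a path in $\mathbb{CAY}(R)$ (avoiding your absorption of the nilpotent error $\eta$), and treats the zero-dimensional and Noetherian cases of~(ii) separately rather than unifying them as you do.
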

\begin{proof}
{First suppose that $x+Nil(R)\in Z(\frac{R}{Nil(R)})$. Thus, there exists $y\in R\setminus Nil(R)$ such that $xy\in Nil(R)$. So, there exists a positive integer $n$ such that $(xy)^n=0$. Since $y\in R\setminus Nil(R)$, there exists a non-negative integer $l$, $l<n$, such that
$x^{n-l}y^n=0$ and $x^{n-l-1}y^n\neq 0$. This implies that $x\in Z(R)$. \\To complete the proof of Part (i), note that if $$1+Nil(R)-\hspace{-.2cm}-z_1+Nil(R)-\hspace{-.2cm}-\cdots-\hspace{-.2cm}-z_n+Nil(R)-\hspace{-.2cm}-Nil(R),$$ is a path from $1+Nil(R)$ to $Nil(R)$ in
$\mathbb{CAY}(\frac{R}{Nil(R)})$, then $1-\hspace{-.2cm}-z_1-\hspace{-.2cm}-\cdots-\hspace{-.2cm}-z_n-\hspace{-.2cm}-0$ is a path from
 $1$ to $0$ in $\mathbb{CAY}(R)$. Now, by the argument in the proof of Theorem \ref{diameter}, $diam(\mathbb{CAY}(R))\leq diam(\mathbb{CAY}(\frac{R}{Nil(R)}))$.

We now prove the Part (ii). First suppose that $R$ is a Noetherian ring with ${\rm Min}(R)={\rm Ass}(R)=\{\mathfrak{p}_1,\ldots,\mathfrak{p}_n\}$.
By \cite[Corollary 9.36]{sharp}, $Z(R)=\cup_{i=1}^n\mathfrak{p}_i$.
Since $\frac{R}{Nil(R)}$ is a reduced ring, by Lemma \ref{minimalozerodivisors}, we conclude that $Z(\frac{R}{Nil(R)})= \cup_{i=1}^n\frac{\mathfrak{p}_i}{Nil(R)}$. Therefore, for every $x\in Z(R)$,  $x+Nil(R)\in Z(\frac{R}{Nil(R)})$.
Now, suppose that  $R$ is a ring with $dim(R)=0$. Let $x\in Z(R)$. Thus, by Corollary \ref{zerodivisorsofzero-dimensionals}, there exists $\mathfrak{p}\in {\rm Min}(R)$ such that $x\in \mathfrak{p}$. Hence, by Lemma \ref{minimalozerodivisors}, there exists $y\in R\setminus \mathfrak{p}$ such that $xy\in Nil(R)$. This implies that $x+Nil(R)\in Z(\frac{R}{Nil(R)})$. Now, similar to the last part of the proof of Part (i), the proof of Part (ii) is complete.}
\end{proof}
\begin{remark}\label{zerodivisorsofreductions}
Let $R$ be a Noetherian ring $R$ with ${\rm Min}(R) \neq {\rm Ass}(R)=\{\mathfrak{p}_1,\ldots,\mathfrak{p}_n\}$. Since ${\rm Min}(R) \subseteq {\rm Ass}(R)$, one may assume that ${\rm Min}(R)=\{\mathfrak{p}_1,\ldots,\mathfrak{p}_k\}$, for some $k$, $1\leq k< n$. By the Prime Avoidance Theorem (\cite[Theorem 3.61]{sharp}), let $x\in \mathfrak{p}_{k+1}\setminus \cup_{i=1}^k \mathfrak{p}_i$. We have the condition $x\in \cup_{i=1}^n \mathfrak{p}_i=Z(R)$ but $x+Nil(R)\notin \cup_{i=1}^k \frac{\mathfrak{p}_i}{Nil(R)}=Z(\frac{R}{Nil(R)})$. Hence, in  Part (ii) of the previous lemma, ${\rm Min}(R)={\rm Ass}(R)$ is not superfluous.
\end{remark}

Note that by Lemma \ref{zerodivisorsareideal} and Corollary \ref{zerodivisorsofzero-dimensionals}, if $R$ is a zero-dimensional local ring, then $\mathbb{CAY}(R)$ is a disjoint union of complete graphs. Now, we are in a position to prove the following result.
\begin{thm}\label{atleasttwominimaprimesareconnected}
Let $R$ be a zero-dimensional non-local ring. Then $\mathbb{CAY}(R)$ is connected and $diam(\mathbb{CAY}(R))=2$.
\end{thm}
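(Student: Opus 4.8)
The plan is to reduce everything to Theorem \ref{diameter}, which already packages both claims: $\mathbb{CAY}(R)$ is connected if and only if $R=(Z(R))$, and when it is connected, $diam(\mathbb{CAY}(R))=d(0,1)$ equals the least integer $n$ with $1=z_1+\cdots+z_n$ for $z_i\in Z(R)$. So the entire theorem will follow once I exhibit a representation of $1$ as a sum of \emph{exactly two} zero-divisors, together with the observation that one zero-divisor never suffices.

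First I would record the lower bound. Since $1$ is a unit, $1\notin Z(R)$, so $1$ cannot equal a single element of $Z(R)$; hence the least $n$ in Theorem \ref{diameter}(iii) is at least $2$, and consequently $d(0,1)\geq 2$ whenever the graph is connected. Therefore it suffices to find $z_1,z_2\in Z(R)$ with $1=z_1+z_2$: this single fact gives $1\in(Z(R))$, hence $R=(Z(R))$ and connectedness, and it pins the least integer (and thus the diameter) down to exactly $2$.

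To produce such a representation I would exploit zero-dimensionality together with non-locality. By Corollary \ref{zerodivisorsofzero-dimensionals}, since $dim(R)=0$, the set $Z(R)$ is precisely the union of all maximal ideals of $R$. Because $R$ is non-local it has two distinct maximal ideals $\mathfrak{m}_1\neq\mathfrak{m}_2$. Distinct maximal ideals are comaximal, so $\mathfrak{m}_1+\mathfrak{m}_2=R$, and we may write $1=a+b$ with $a\in\mathfrak{m}_1$ and $b\in\mathfrak{m}_2$. Each of $a,b$ lies in a maximal ideal, hence in $Z(R)$ by the Corollary, so $1=a+b$ is the desired sum of two zero-divisors.

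I do not anticipate any genuine obstacle here; the argument is short precisely because the heavy lifting is done by Theorem \ref{diameter} and Corollary \ref{zerodivisorsofzero-dimensionals}. The only points needing a word of justification are the comaximality $\mathfrak{m}_1+\mathfrak{m}_2=R$ (immediate from maximality, since $\mathfrak{m}_1+\mathfrak{m}_2$ properly contains $\mathfrak{m}_1$) and the inclusions $\mathfrak{m}_i\subseteq Z(R)$ (which is exactly the content of Corollary \ref{zerodivisorsofzero-dimensionals} in the zero-dimensional case). Assembling these with the lower bound $d(0,1)\geq 2$ yields $diam(\mathbb{CAY}(R))=2$ and completes the proof.
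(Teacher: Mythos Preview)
Your proof is correct and, in fact, more direct than the paper's. Both arguments ultimately feed into Theorem~\ref{diameter} by exhibiting $1$ as a sum of two zero-divisors, but the constructions differ. The paper first invokes Lemma~\ref{zerodivisorsofreducedsareshifted} to reduce to the reduced case, observes that a zero-dimensional reduced ring is von Neumann regular, extracts a nontrivial idempotent $e$ from a non-zero zero-divisor, and then builds the decomposition from $e$ and $1-e$. You bypass all of this: using only Corollary~\ref{zerodivisorsofzero-dimensionals} and the comaximality of two distinct maximal ideals, you write $1=a+b$ with $a\in\mathfrak{m}_1\subseteq Z(R)$ and $b\in\mathfrak{m}_2\subseteq Z(R)$ directly, with no reduction step and no structural theory of von Neumann regular rings. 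Your route is shorter and uses strictly less machinery; the paper's route has the mild side benefit of producing an explicit idempotent, but that is not needed for the theorem at hand. One small point worth making explicit in your write-up: neither $a$ nor $b$ can be zero (else $1$ would lie in a maximal ideal), so both summands are genuinely in $Z^*(R)$, matching the edge set of $\mathbb{CAY}(R)$.
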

\begin{proof}
{By Lemma \ref{zerodivisorsofreducedsareshifted}, one may assume that $R$ is a zero-dimensional reduced ring. Thus, by \cite[Exercise 22, p.64]{kaplansky}, $R$ is a von Neumann regular ring. Since $|{\rm{Min}}(R)|\geq 2$, $R$ has a non-zero zero-divisor, say $x$. Since $R$ is a von Neumann regular ring, there exists $y\in R$ such that $x=x^2y$ and so $e=xy$ is a non-zero idempotent of $R$. Note that $xe$ and $1-e$ are non-zero zero-divisors of $R$. Since $u=xe+1-e$ is a unit of $R$ with inverse $ye+1-e$, we conclude that $1$ is a sum of two zero-divisors of $R$.  Thus, by Theorem \ref{diameter}, $diam(\mathbb{CAY}(R))\le2$. As $\mathbb{CAY}(R)$ is not a complete graph, we have $diam(\mathbb{CAY}(R))=2$. The assertion is proved.}
\end{proof}

The following result is a generalization of \cite[Theorem 3.4]{totalgraph}. The proof simply follows from Theorem \ref{diameter}, Example \ref{equalitywithdiameteroftotalgraph} and the previous result.
\begin{cor}\label{connectedness-of-total-graph}
Let $R$ be a zero-dimensional non-local ring. Then $T(\Gamma(R))$ is connected and $diam(T(\Gamma(R)))=2$.
\end{cor}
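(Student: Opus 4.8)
The plan is to transfer what we already know about $\mathbb{CAY}(R)$ to the total graph $T(\Gamma(R))$ by exploiting their common diameter characterization. First I would invoke Theorem \ref{atleasttwominimaprimesareconnected}: since $R$ is zero-dimensional and non-local, $\mathbb{CAY}(R)$ is connected and $diam(\mathbb{CAY}(R))=2$. Connectedness of $\mathbb{CAY}(R)$, combined with Theorem \ref{diameter}, forces the structural condition $R=(Z(R))$, i.e. the zero-divisors generate the unit ideal.

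With $R=(Z(R))$ in hand, I would appeal to Example \ref{equalitywithdiameteroftotalgraph}, which records (via \cite[Theorem 3.4]{totalgraph}) that under precisely this hypothesis $diam(\mathbb{CAY}(R))=diam\big(T(\Gamma(R))\big)$. Both diameters are governed by the same arithmetic quantity, namely the least number of zero-divisors summing to $1$ (cf. part (iii) of Theorem \ref{diameter}), so the equality is immediate. Chaining the two steps gives $diam\big(T(\Gamma(R))\big)=diam(\mathbb{CAY}(R))=2$; in particular $T(\Gamma(R))$ is connected, which is the assertion.

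The only point requiring care is the logical ordering: Example \ref{equalitywithdiameteroftotalgraph} supplies the diameter equality exactly when $R=(Z(R))$, so I must secure that condition \emph{before} quoting it. This is why the detour through Theorem \ref{diameter} is necessary rather than a direct comparison of the two graphs. Since all the substantive work, namely the reduction to a von Neumann regular ring and the decomposition $1=xe+(1-e)$ into a sum of two zero-divisors, was already carried out in proving Theorem \ref{atleasttwominimaprimesareconnected}, no genuine obstacle remains here; the corollary is a bookkeeping consequence of the three cited facts.
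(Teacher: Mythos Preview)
Your proposal is correct and follows exactly the route the paper takes: the paper's proof is a single sentence invoking Theorem~\ref{diameter}, Example~\ref{equalitywithdiameteroftotalgraph}, and Theorem~\ref{atleasttwominimaprimesareconnected}, which is precisely the chain you spell out. Your added remark about securing $R=(Z(R))$ before quoting the diameter equality is the right logical hygiene, but no step differs from the paper's intended argument.
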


Till now, we have studied the connectedness of $\mathbb{CAY}(R)$ for every zero-dimensional ring $R$. The following example shows that Theorem \ref{atleasttwominimaprimesareconnected} does not hold for  Noetherian rings with at least two minimal prime ideals. Moreover, it shows that the assumptions in the previous theorem are necessary.

\begin{example}
Let $K$ be a field and $R=\frac{K[x,y]}{(xy)}$. Clearly, $R$ is a Noetherian
reduced ring with $dim(R)=1$, ${\rm Min}(R)=\{(x+(xy)),(y+(xy))\}$ and by Lemma \ref{minimalozerodivisors}, $Z(R)=(x+(xy))\cup(y+(xy))$. Thus, $\mathbb{CAY}(R)$ is a disjoint union of $|K|$ connected graphs of diameter $2$ with the vertex sets $\Gamma_{\alpha}=\{\alpha+f(x)+g(y)+(xy)\,:\, f\in K[x], g\in K[y]\ {\rm{and}}\  f(0)=g(0)=0\}$, where $\alpha\in K$.
Note that for every $\alpha\in K$, the ideals $\frac{(x-\alpha,y)}{(xy)}$ and $\frac{(x,y-\alpha)}{(xy)}$ are distinct maximal ideals of $R$. So, $R$ is a non-local ring. 
\end{example}

In the sequel, for a finite  ring $R$, we obtain the vertex connectivity and edge connectivity of $\mathbb{CAY}(R)$. We first need the following lemmas.
\begin{lem}\label{shiftingbynilpotents}
Let $R$ be a ring, $x\in R$ and $a\in Nil(R)$. Then $x+a\in Z(R)$ if and only if $x\in Z(R)$.
\end{lem}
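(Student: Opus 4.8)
The claim is that for $x \in R$ and $a \in \mathit{Nil}(R)$, we have $x+a \in Z(R)$ if and only if $x \in Z(R)$. The plan is to prove both directions by exploiting the fact that a nilpotent element is itself a zero-divisor (when nonzero) and, more importantly, that nilpotents can be passed through the quotient map $R \to R/\mathit{Nil}(R)$ harmlessly. The cleanest route is to reduce everything to a statement about the reduced ring $R/\mathit{Nil}(R)$ and invoke Lemma \ref{zerodivisorsofreducedsareshifted}(i), since both $x$ and $x+a$ have the same image modulo $\mathit{Nil}(R)$.

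First I would dispose of the trivial case $a=0$. Assuming $a \neq 0$, observe that $x + a + \mathit{Nil}(R) = x + \mathit{Nil}(R)$ in $R/\mathit{Nil}(R)$, because $a \in \mathit{Nil}(R)$. Now suppose $x \in Z(R)$; I want $x+a \in Z(R)$. There are two subcases. If $x$ is nilpotent, then $x+a$ is a sum of two nilpotents, hence nilpotent, and a nilpotent element of a ring (being either $0$ or a zero-divisor) lies in $Z(R)$ — so I should handle the degenerate possibility that $x+a=0$ separately, in which case $0 \in Z(R)$ trivially. If $x$ is a non-nilpotent zero-divisor, then its image $x+\mathit{Nil}(R)$ is a nonzero zero-divisor in $R/\mathit{Nil}(R)$: indeed there is $y \notin \mathit{Nil}(R)$ with $xy \in \mathit{Nil}(R)$ witnessing this in the quotient. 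Since $x+a$ has the same nonzero image, applying Lemma \ref{zerodivisorsofreducedsareshifted}(i) to $x+a$ yields $x+a \in Z(R)$. Conversely, if $x+a \in Z(R)$, then since $-a \in \mathit{Nil}(R)$ as well, the symmetric argument applied to $(x+a) + (-a) = x$ gives $x \in Z(R)$.

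The cleanest packaging, which avoids the case split, is to route everything through the quotient directly. The key observation is: for any $w \in R$, if $w+\mathit{Nil}(R) \in Z(R/\mathit{Nil}(R))$ then $w \in Z(R)$ by Lemma \ref{zerodivisorsofreducedsareshifted}(i), and conversely a non-nilpotent zero-divisor maps to a nonzero zero-divisor in the reduced quotient. Since $x+a$ and $x$ share the image $x+\mathit{Nil}(R)$, the membership of $x+a$ in $Z(R)$ and of $x$ in $Z(R)$ are each tied to whether that common image is a zero-divisor in the quotient — the only caveat being the nilpotent case, where the image is $0$ and both $x$ and $x+a$ are nilpotent (hence in $Z(R)$ unless they vanish, and $0 \in Z(R)$ anyway).

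The only genuine obstacle is the boundary bookkeeping around nilpotent and zero elements: when $x+\mathit{Nil}(R) = \mathit{Nil}(R)$, the quotient argument says nothing, and one must separately confirm that $x$ nilpotent forces both $x$ and $x+a$ into $Z(R)$ (treating the vacuous case that one of them is $0$, using the convention that $0 \in Z(R)$). Everything else is a direct application of the already-established Lemma \ref{zerodivisorsofreducedsareshifted}(i), so the proof is short once that case is correctly isolated.
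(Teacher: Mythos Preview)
Your argument has a genuine gap in the non-nilpotent case. You assert that if $x$ is a non-nilpotent zero-divisor then $x+Nil(R)$ is a zero-divisor in $R/Nil(R)$, claiming there exists $y\notin Nil(R)$ with $xy\in Nil(R)$. But this is false in general: all you know is that some \emph{nonzero} $y$ annihilates $x$, and every such $y$ may well be nilpotent. A concrete counterexample is $R=k[s,t]/(s^2,st)$: here $t$ is a non-nilpotent zero-divisor (since $st=0$), yet $Nil(R)=(s)$ and $R/Nil(R)\cong k[t]$ is a domain, so $t+Nil(R)$ is regular. This is exactly the phenomenon flagged in Remark~\ref{zerodivisorsofreductions}: the implication $x\in Z(R)\Rightarrow x+Nil(R)\in Z(R/Nil(R))$ requires extra hypotheses (e.g.\ $\dim R=0$ or ${\rm Min}(R)={\rm Ass}(R)$), which are not available here. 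Lemma~\ref{zerodivisorsofreducedsareshifted}(i) only gives you the \emph{opposite} direction, so routing through the quotient cannot close the loop.

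The paper's proof avoids the quotient entirely and works directly in $R$. Given $xy=0$ with $y\neq 0$ and $a^n=0$ with $a^{n-1}\neq 0$, one chooses the largest $k\le n$ with $ya^{n-k}\neq 0$; then $ya^{n-k}(x+a)=a^{n-k}(xy)+ya^{n-k+1}=0$, exhibiting a nonzero annihilator of $x+a$. The converse follows by symmetry via $x=(x+a)+(-a)$. This elementary construction is what you are missing; the quotient-ring strategy simply does not suffice without further assumptions on $R$.
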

\begin{proof}
{If $a=0$, then the assertion is obvious. Thus, assume that $a\neq 0$. First suppose that there exists $0\neq y\in R$ such that $xy=0$. Since $a$ is nilpotent, there exists a positive integer $n$ such that $a^n=0$ and $a^{n-1}\neq 0$. Let $k$, $k\leq n$, be the biggest positive integer such that $ya^{n-k}\neq 0$ and $ya^{n-k+1}=0$. Clearly, $ya^{n-k}(x+a)=0$ and so $x+a\in Z(R)$. Conversely, suppose that $x+a\in Z(R)$. Thus, $x=x+a+(-a)\in Z(R)$, as desired.}
\end{proof}

Let $G$ be a connected graph. A non-empty subset $S$ of vertices of $G$ is called a \textit{vertex cut} of $G$ if $G-S$ (the removal of vertices of $S$ from $G$) is not connected or has exactly one vertex. Note that by Menger's Theorem, for a finite connected graph $G$, $\kappa(G)$ is equal to the minimum size of vertex cuts of $G$ (see \cite[Theorem 4.2.21]{west}).
\begin{lem}\label{connectivityofproductoffields}
Let $n\geq 2$ be a positive integer and $F_1,\ldots,F_n$ be finite fields. Then $$\kappa(\mathbb{CAY}(F_1\times \cdots\times F_n))\geq |Z(F_1\times \cdots\times F_n)|-1.$$
\end{lem}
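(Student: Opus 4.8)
The plan is to show that $\mathbb{CAY}(R)$, which is $(|Z(R)|-1)$-regular by Lemma \ref{khavaseebtedaei}, is in fact \emph{maximally connected}; since one always has $\kappa(G)\le\delta(G)$, the content of the statement is the inequality $\kappa\ge|Z(R)|-1$. Writing $R=F_1\times\cdots\times F_n$, I would first record the adjacency rule: a nonzero element is a zero-divisor exactly when at least one coordinate vanishes, so two distinct vertices $x,y$ are adjacent in $\mathbb{CAY}(R)$ iff $x_i=y_i$ for some $i$, and in a finite ring $R=Z(R)\cup U(R)$. By Menger's Theorem it suffices to produce, for every pair of non-adjacent vertices, at least $|Z(R)|-1$ internally disjoint paths joining them. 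Since $\mathbb{CAY}(R)$ is a Cayley graph of $R^+$ (translations are automorphisms, Lemma \ref{khavaseebtedaei}), I would translate one vertex to $0$; the other then becomes a vertex non-adjacent to $0$, i.e.\ a unit $u\in U(R)$ whose coordinates are all nonzero. Thus the whole problem reduces to exhibiting $|Z(R)|-1$ internally disjoint $0$--$u$ paths.

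Next I would split the neighborhood $N(0)=Z^*(R)$ according to adjacency with $u$. Each common neighbor $z\in N(0)\cap N(u)$ yields the length-$2$ path $0-\hspace{-.2cm}-z-\hspace{-.2cm}-u$, and these use pairwise distinct internal vertices. The remaining neighbors of $0$ form the set $A=N(0)\setminus N(u)$, consisting of those $z$ that have a zero coordinate but satisfy $z_i\neq u_i$ for all $i$; each such $z$ needs a longer detour to $u$ whose interior avoids everything used so far. The natural device is the map $\phi$ replacing every zero coordinate of $z$ by the corresponding coordinate of $u$, i.e.\ $\phi(z)_i=u_i$ if $z_i=0$ and $\phi(z)_i=z_i$ otherwise. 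One checks that $\phi(z)$ has no zero coordinate yet agrees with $u$ in at least one coordinate, so $\phi(z)\in B:=N(u)\setminus N(0)$, and that $\phi(z)$ agrees with $z$ off the zero-set of $z$, so $z-\hspace{-.2cm}-\phi(z)$ is an edge; this produces the length-$3$ path $0-\hspace{-.2cm}-z-\hspace{-.2cm}-\phi(z)-\hspace{-.2cm}-u$.

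The crux, and the step I expect to require the most care, is verifying that all these paths are internally disjoint, equivalently that $\phi$ is injective and that $A$, $B$ and $N(0)\cap N(u)$ are pairwise disjoint. Disjointness of the three sets is immediate from the zero-coordinate description (elements of $A$ have a zero coordinate, those of $B$ do not, and neither lies in $N(0)\cap N(u)$). For injectivity of $\phi$ the key observation is that the zero-set of $z$ is \emph{forced} to equal $\{i:\phi(z)_i=u_i\}$: indeed $\phi$ fixes the nonzero coordinates of $z$, none of which may equal the corresponding coordinate of $u$ since $z\in A$, so $z$ is recovered from $\phi(z)$ by zeroing out exactly the coordinates where $\phi(z)$ matches $u$. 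Hence $z\mapsto\phi(z)$ is injective (in fact a bijection $A\to B$, as $|A|=|B|=|Z(R)|-1-|N(0)\cap N(u)|$ by regularity), so the length-$3$ paths have distinct internal vertices disjoint from the length-$2$ paths, and altogether one gets $|N(0)\cap N(u)|+|A|=|N(0)|=|Z(R)|-1$ internally disjoint $0$--$u$ paths. By Menger's Theorem this gives $\kappa(\mathbb{CAY}(F_1\times\cdots\times F_n))\ge|Z(R)|-1$, as desired.
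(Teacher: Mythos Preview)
Your proof is correct and takes a genuinely different route from the paper's. The paper proceeds by induction on $n$: the base case $n=2$ is handled by citing a known formula for $\kappa(K_{|F_1|}\Box K_{|F_2|})$, and the inductive step is a rather long case analysis (according as $X$ and $Y$ share a coordinate or not), in which several explicit families of paths are constructed and counted, with a further reduction in Case~2 down to $F_i\cong\mathbb{Z}_2$. Your argument, by contrast, is direct and induction-free: after the (legitimate) reduction via vertex transitivity and Menger's Theorem to a non-adjacent pair $0,u$ with $u\in U(R)$, you dispose of common neighbours by length-$2$ paths and then route each remaining neighbour $z\in A=N(0)\setminus N(u)$ through $\phi(z)\in B=N(u)\setminus N(0)$, where $\phi$ overwrites the zero coordinates of $z$ by those of $u$. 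The key point---that $\{i:\phi(z)_i=u_i\}$ coincides with the zero-set of $z$ because $z\in A$ forces $z_i\neq u_i$ on the nonzero coordinates---cleanly yields injectivity of $\phi$ and hence pairwise internal disjointness.

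What each approach buys: the paper's inductive proof actually establishes the stronger fact that \emph{every} pair of vertices (adjacent or not) is joined by $|Z(R)|-1$ internally disjoint paths, which is more than is needed for the connectivity bound but is exactly what is invoked later in the proof of Theorem~\ref{vertex-edge-connect-no} for the case $x-y\in Nil(R)$ (there the lifted vertices are adjacent). Your argument is shorter, self-contained (no appeal to the $K_m\Box K_n$ connectivity result), and conceptually cleaner, but it only treats non-adjacent pairs; this is perfectly sufficient for the lemma as stated, since $\mathbb{CAY}(R)$ is never complete and a minimum vertex cut always separates some non-adjacent pair.
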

\begin{proof}
{We show that for every two distinct vertices $X$ and $Y$ of $\mathbb{CAY}(F_1\times \cdots\times F_n)$, there are at least $|Z(F_1\times \cdots\times F_n)|-1$ pairwise vertex internally disjoint paths (or simply pairwise internally disjoint paths) from $X$ to $Y$. We prove this by induction on $n$. By \cite{spacapan}, $\kappa(K_{|F_1|}\Box K_{|F_2|})=|F_1|+|F_2|-2$. Note that $|Z(F_1\times F_2)|-1=|(F_1\times F_2)\setminus (F_1^*\times F_2^*)|-1=|F_1|+|F_2|-2$, where $F_i^*=F_i\setminus\{0\}$. Since  $\mathbb{CAY}(F_1\times F_2)\cong K_{|F_1|}\Box K_{|F_2|}$, for $n=2$ the assertion holds. Now, let the assertion hold for $n$ and let $X=(x_1,\ldots,x_{n+1})$, $Y=(y_1,\ldots,y_{n+1})\in F_1\times \cdots\times F_{n+1}$. First we need some notations. We recursively express $X$ and $Y$ by $X=(x_1,\widehat{X})$ and $Y=(y_1,\widehat{Y})$, where $\widehat{X}=(x_2,\ldots,x_{n+1})\in F_2\times \cdots\times F_{n+1}$ and $\widehat{Y}=(y_2,\ldots,y_{n+1})\in F_2\times \cdots\times F_{n+1}$, respectively. 

If $\widehat{X}\neq \widehat{Y}$, by induction hypothesis, there exist $|Z(F_2\times \cdots\times F_{n+1})|-1$ pairwise internally disjoint paths from $\widehat{X}$ to $\widehat{Y}$ in $\mathbb{CAY}(F_2\times \cdots\times F_{n+1})$. It means that for every vertex $U$ adjacent to $\widehat{X}$ in $\mathbb{CAY}(F_2\times \cdots\times F_{n+1})$, there exists a unique path $P(U)$ (among those $|Z(F_2\times \cdots\times F_{n+1})|-1$ paths) which contains $U$. Let $P_t(U)$ be that terminal vertex of $P(U)$ which is adjacent to $\widehat{Y}$. So, $P(U)$ has the following form. 
$$P(U): \widehat{X}-\hspace{-.2cm}-U-\hspace{-.2cm}-\cdots-\hspace{-.2cm}-P_t(U)-\hspace{-.2cm}-\widehat{Y}.$$
\noindent Note that $U\neq \widehat{X}$ and $P_t(U)\neq \widehat{Y}$. If $U=P_t(U)$, we simply mean $\widehat{X}-\hspace{-.2cm}-U-\hspace{-.2cm}-\widehat{Y}$.

Now, we have the required notations to show that there exist at least $|Z(F_1\times \cdots\times F_{n+1})|-1$ pairwise  internally disjoint paths from $X$ to $Y$. We need to consider the two following cases.

\noindent
{\rm \bf {Case 1. }} $X$ and $Y$ have at least an equal component, say the first component.

Let $X=(a,\widehat{X})$, $Y=(a,\widehat{Y})$. Note that in this case $\widehat{X}\neq \widehat{Y}$. Thus,  $X-\hspace{-.2cm}-(a,A)-\hspace{-.2cm}-Y$ is a path from $X$ to $Y$, for every $A\in F_2\times \cdots\times F_{n+1}\setminus \{\widehat{X}, \widehat{Y}\}$. Every two paths of this form are internally disjoint. Since $X$ and $Y$ are adjacent, we find $|F_2\times \cdots\times F_{n+1}|-1$ pairwise  internally disjoint paths from $X$ to $Y$ of the following types. 

{\rm \bf {Type 1.1. }} The single path $X-\hspace{-.2cm}-Y$ of length $1$.

{\rm \bf {Type 1.2. }} The paths of the form $X-\hspace{-.2cm}-(a,A)-\hspace{-.2cm}-Y$ of length $2$, where $A\in F_2\times \cdots\times F_{n+1}\setminus \{\widehat{X}, \widehat{Y}\}$.

Since $\widehat{X}\neq \widehat{Y}$, we are allowed to use notation $P_t(U)$ for every vertex $U$ adjacent to $\widehat{X}$. Now, let $b\in F_1\setminus \{a\}$. We consider the following type of  paths from $X$ to $Y$:

{\rm \bf {Type 1.3. }} The paths of the form $X-\hspace{-.2cm}-(b,U)-\hspace{-.2cm}-(b,P_t(U))-\hspace{-.2cm}-Y$, where $b\in F_1\setminus \{a\}$ and  $U$ is a vertex adjacent to $\widehat{X}$ in  $\mathbb{CAY}(F_2\times \cdots\times F_{n+1})$. 

If $U=P_t(U)$, we simply mean $X-\hspace{-.2cm}-(b,U)-\hspace{-.2cm}-Y$. Since for every two distinct vertices $U$ and $V$ which both are  adjacent to $\widehat{X}$, we have $\{U,P_t(U)\}\cap\{V,P_t(V)\}=\varnothing$, we conclude that all the paths of  this type are pairwise  internally disjoint. The number of paths of type $3$ is $(|F_1|-1)(|Z(F_2\times \cdots\times F_{n+1})|-1)$. 

Finally, we consider the next type of the paths. 

{\rm \bf {Type 1.4. }} The paths of the form $X-\hspace{-.2cm}-(b,\widehat{X})-\hspace{-.2cm}-(b,\widehat{Y})-\hspace{-.2cm}-Y$ of length $4$, where $b\in F_1\setminus \{a\}$. 
 
 It is clear that the paths of this type form $|F_1|-1$  pairwise  internally disjoint paths from $X$ to $Y$.  
 
Note that by the construction we give, the paths of the same type are pairwise  internally disjoint. Moreover, two paths of different types are internally disjoint. Thus, from these $4$ types of paths we obtain 
$$(|F_1|-1)|Z(F_2\times \cdots\times F_{n+1})|+|F_2\times \cdots\times F_{n+1}|-1$$
pairwise  internally disjoint paths from $X$ to $Y$. Since 
$$Z(F_1\times \cdots\times F_{n+1})=\Big(\hspace{-3mm}\bigcup_{\hspace{3mm}x\in F_1\setminus\{0\}}\hspace{-4mm}\{x\}\times Z(F_2\times \cdots\times F_{n+1})\Big)\bigcup\, \{0\}\times F_2\times \cdots\times F_{n+1},$$
we deduce that $$(|F_1|-1)|Z(F_2\times \cdots\times F_{n+1})|+|F_2\times \cdots\times F_{n+1}|-1=|Z(F_1\times \cdots\times F_{n+1})|-1.$$
Hence, we have $|Z(F_1\times \cdots\times F_{n+1})|-1$ pairwise internally disjoint paths from $X$ to $Y$ in this case. 

\noindent
{\rm \bf {Case 2. }} For $i=1,\ldots, n+1$, $x_i\neq y_i$.

We show that we can assume that every $F_i\cong\mathbb{Z}_2$. Suppose that one of the $F_i$, say $F_1$, has at least $3$ elements. Similar to the previous case, we consider the following types of paths from $X$ to $Y$.

{\rm \bf {Type 2.1. }} The paths of the form $X-\hspace{-.2cm}-(b,U)-\hspace{-.2cm}-(b,P_t(U))-\hspace{-.2cm}-Y$, where $b\in F_1\setminus \{x_1,y_1\}$ and  $U$ is a vertex adjacent to $\widehat{X}$ in  $\mathbb{CAY}(F_2\times \cdots\times F_{n+1})$. 

{\rm \bf {Type 2.2. }} The paths of the form $X-\hspace{-.2cm}-(b,\widehat{X})-\hspace{-.2cm}-(b,\widehat{Y})-\hspace{-.2cm}-Y$ of length $4$, where $b\in F_1\setminus \{x_1,y_1\}$. 

These two types of paths give 
$$(|F_1|-2)\big(|Z(F_2\times \cdots\times F_{n+1})|-1\big)+|F_1|-2=(|F_1|-2)|Z(F_2\times \cdots\times F_{n+1})|$$ 
pairwise  internally disjoint paths from $X$ to $Y$ with the internal vertices in $(F_1\setminus\{x_1,y_1\})\times F_2\times \cdots\times F_{n+1}$. 

Hence, every possible path from $X$ to $Y$ with vertices in $\{x_1,y_1\}\times F_2\times \cdots\times F_{n+1}$ is internally disjoint from the paths in Types 2.1 and 2.2. Thus, if we find $|Z(F_1\times \cdots\times F_{n+1})|-1-(|F_1|-2)|Z(F_2\times \cdots\times F_{n+1})|$ pairwise internally disjoint paths from $X$ to $Y$ with vertices in $\{x_1,y_1\}\times F_2\times \cdots\times F_{n+1}$, then we are done. To see this, we first consider the following claim. 

{\rm \bf {Claim. }} $|Z(F_1\times \cdots\times F_{n+1})|-1-(|F_1|-2)|Z(F_2\times \cdots\times F_{n+1})|$ is the number of neighbors of $X$ in $\{x_1,y_1\}\times F_2\times \cdots\times F_{n+1}$. 

To prove the claim, let $N_1$ be the set of neighbors of $X$ in $(F_1\setminus\{x_1,y_1\})\times F_2\times \cdots\times F_{n+1}$ and $N_2$ be the set of neighbors of $X$ in $\{x_1,y_1\}\times F_2\times \cdots\times F_{n+1}$. Clearly $|Z(F_1\times \cdots\times F_{n+1})|-1=|N_1|+|N_2|$ and $|N_1|=(|F_1|-2)|Z(F_2\times \cdots\times F_{n+1})|$. This proves the claim.

Hence, to complete the proof, we should find $|N_2|$ pairwise internally disjoint paths from $X$ to $Y$ whose vertices are in $\{x_1,y_1\}\times F_2\times \cdots\times F_{n+1}$.  It is not hard to check that $|N_2|$ is degree of $(1,\widehat{X})$ in $\mathbb{CAY}(\mathbb{Z}_2\times F_2\times \cdots\times F_{n+1})$. Hence, it suffices to find at least $|Z(\mathbb{Z}_2\times F_2\times \cdots\times F_{n+1})|-1$ pairwise  internally disjoint paths from $(1,\widehat{X})$ to $(0,\widehat{Y})$ in $\mathbb{CAY}(\mathbb{Z}_2\times F_2\times \cdots\times F_{n+1})$. Thus, we may assume that $|F_1|=2$ and by continuing this procedure we can suppose that $F_i\cong \mathbb{Z}_2$, for  $i=1,\ldots,n+1$ and $X=(1,\ldots,1)$, $Y=(0,\ldots,0)$. Since for every $z\in Z^*(\mathbb{Z}_2\times\cdots\times \mathbb{Z}_2)$, $(1,\ldots,1)-\hspace{-.2cm}-z-\hspace{-.2cm}-(0,\ldots,0)$ is a path from $(1,\ldots,1)$ to $(0,\ldots,0)$, we obtain $|Z(\mathbb{Z}_2\times\cdots\times \mathbb{Z}_2)|-1$ pairwise  internally disjoint paths from $(1,\ldots,1)$ to $(0,\ldots,0)$. Hence, by the above argument we can find $|Z(F_1\times \cdots\times F_{n+1})|-1$ pairwise internally disjoint paths from $X$ to $Y$. 

This implies that the assertion holds for $n+1$ in both Cases 1 and 2. So by induction the proof is complete.}
\end{proof}
\begin{thm}{\rm {(\cite[Theorem 9.14]{bondy})}}\label{edge-connectivity-vs-vertex-transitivity}
Let $G$ be a simple connected vertex transitive graph of positive degree $d$. Then $\kappa'(G)=d$.
\end{thm}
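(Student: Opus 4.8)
The plan is to prove the two inequalities $\kappa'(G) \le d$ and $\kappa'(G) \ge d$ separately. The first is immediate: since $G$ is vertex transitive it is regular of degree $d$, so deleting the $d$ edges incident with any one vertex disconnects that vertex (using $d \ge 1$ and $|V(G)| \ge 2$), whence $\kappa'(G) \le \delta(G) = d$. All the work lies in the reverse inequality $\kappa'(G) \ge d$, and for that I would exploit the symmetry of $G$ rather than attempt a direct counting on an arbitrary cut.

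For the reverse inequality I would write $f(X) = |[X, V(G)\setminus X]|$ for the number of edges between $X$ and its complement, so that $\kappa'(G) = \min f(X)$ over proper non-empty $X$. Call $A$ an \emph{atom} if it is a proper non-empty set of minimum cardinality subject to $f(A) = \kappa'(G)$; we may take $|A| \le |V(G)|/2$. The structural heart of the argument is the claim that any two atoms are either disjoint or equal, so that (by vertex transitivity) the translates $\sigma(A)$, $\sigma \in \mathrm{Aut}(G)$, partition $V(G)$.

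The main obstacle is exactly this disjoint-or-equal claim, which I would establish through submodularity of the cut function, $f(X) + f(Y) \ge f(X\cap Y) + f(X \cup Y)$. Applying this with $X = A$ and $Y = \sigma(A)$ for an automorphism $\sigma$, and assuming $A \cap \sigma(A) \ne \emptyset$ while $A \ne \sigma(A)$, I would first note that $A \cup \sigma(A) \ne V(G)$ (otherwise the cardinality bound $|A| = |\sigma(A)| \le |V(G)|/2$ forces $A \cap \sigma(A) = \emptyset$). Then both $A \cap \sigma(A)$ and $A \cup \sigma(A)$ are proper non-empty sets, so each has $f \ge \kappa'(G)$; submodularity then collapses to equality, making $A \cap \sigma(A)$ a cut of value $\kappa'(G)$ with $|A \cap \sigma(A)| < |A|$, contradicting minimality of the atom. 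Granting the claim, any $u, v \in A$ are joined by an automorphism that must fix $A$ setwise, so the stabilizer of $A$ acts transitively on $A$; hence the induced subgraph $G[A]$ is vertex transitive, and in particular regular of some degree $d'$.

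The final step is a short counting argument combined with a convexity observation. Setting $a = |A|$ and counting edge-endpoints inside $A$ gives $\kappa'(G) = f(A) = da - 2e(G[A]) = a(d - d')$, using $e(G[A]) = d'a/2$. Because $A$ is proper and $G$ is connected, some edge leaves $A$, so $d - d' \ge 1$ and thus $\kappa'(G) \ge a$; and since $G[A]$ is simple on $a$ vertices, $d' \le a - 1$, giving $\kappa'(G) \ge a(d - a + 1)$. If $a > d$ then $\kappa'(G) \ge a > d$ would contradict $\kappa'(G) \le d$, so $a \le d$. On the range $1 \le a \le d$ the map $a \mapsto a(d - a + 1)$ is concave and takes the value $d$ at both endpoints $a = 1$ and $a = d$, so it is at least $d$ throughout; hence $\kappa'(G) \ge a(d - a + 1) \ge d$. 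Together with $\kappa'(G) \le d$ this yields $\kappa'(G) = d$, as required.
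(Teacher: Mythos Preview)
The paper does not give its own proof of this statement: it is quoted verbatim as \cite[Theorem 9.14]{bondy} and used as a black box in the proof of Theorem~\ref{vertex-edge-connect-no}. So there is nothing in the paper to compare your argument against.

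That said, your proof is correct and is essentially the classical atom argument (due to Mader/Watkins) that underlies the proof in Bondy--Murty. The logic is sound throughout: the submodularity step correctly forces distinct atoms to be disjoint (your handling of the case $A\cup\sigma(A)=V(G)$ via the cardinality bound is fine), transitivity of the setwise stabilizer on $A$ then makes $G[A]$ regular of some degree $d'$, and the counting $\kappa'(G)=a(d-d')$ together with $d'\le a-1$ and the concavity observation $(a-1)(a-d)\le 0$ on $1\le a\le d$ closes the argument. One very minor point: you might note explicitly that $\sigma(A)$ is again an atom (same cardinality, same cut value) so that the minimality contradiction applies symmetrically, but this is implicit in what you wrote.
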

Now, we are in a position to obtain the vertex connectivity and the edge connectivity of $\mathbb{CAY}(R)$. A well-known theorem duo to Watkins (see \cite[Corollary 1.A]{watkins}) states that the vertex connectivity of every connected edge transitive graph $G$ equals to $\delta(G)$. In \cite{akhtar}, the authors apply this theorem to obtain the vertex connectivity of the unitary Cayley graph $G_R$. Unfortunately,  $\mathbb{CAY}(R)$ is not necessarily edge transitive. In \cite{Aali}, all finite rings $R$ whose  $\mathbb{CAY}(R)$ is edge transitive are characterized. The following result states that $\mathbb{CAY}(R)$ is a reliable network i.e. the vertex connectivity of $\mathbb{CAY}(R)$ equals to degree of regularity,  for every finite reduced ring $R$. Since for every finite graph $G$, we have $\kappa(G)\leq\kappa'(G)\leq \delta(G)$ (see \cite[Theorem 4.1.9]{west} or \cite[Exercise 9.3.2]{bondy}), we deduce that $\mathbb{CAY}(R)$ gives a class of vertex transitive graphs with optimal connectivity. 

\begin{thm}\label{vertex-edge-connect-no}
Let $R$ be a finite non-local ring. Then
$\kappa(\mathbb{CAY}(R))=|Z(R)|-|Nil(R)|$ and $\kappa'(\mathbb{CAY}(R))=|Z(R)|-1.$
\end{thm}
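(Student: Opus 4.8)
The plan is to handle the two connectivities separately: the edge connectivity comes straight from vertex transitivity, while the vertex connectivity is reduced to the reduced quotient $\bar{R}=R/Nil(R)$, where Lemma~\ref{connectivityofproductoffields} supplies the essential bound.

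Since $R$ is finite it is Artinian, hence $dim(R)=0$; being non-local, Theorem~\ref{atleasttwominimaprimesareconnected} shows $\mathbb{CAY}(R)$ is connected. By Lemma~\ref{khavaseebtedaei} it is vertex transitive and regular of degree $|Z(R)|-1$, and this degree is positive because a finite non-local ring has a nonzero zero-divisor (by Corollary~\ref{zerodivisorsofzero-dimensionals}, $Z(R)$ is a union of at least two maximal ideals). Theorem~\ref{edge-connectivity-vs-vertex-transitivity} then gives $\kappa'(\mathbb{CAY}(R))=|Z(R)|-1$ at once, settling the edge connectivity.

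For the vertex connectivity the key observation is that $\mathbb{CAY}(R)$ is a lexicographic product. Writing $\pi\colon R\to\bar{R}=R/Nil(R)$ for the canonical map, Lemma~\ref{shiftingbynilpotents} shows that being a zero-divisor is constant on cosets of $Nil(R)$, while Lemma~\ref{zerodivisorsofreducedsareshifted} (both parts, valid since $dim(R)=0$) gives $Z(R)=\pi^{-1}(Z(\bar{R}))$. Hence two distinct elements in a common fiber of $\pi$ are always adjacent (they differ by a nonzero nilpotent), and elements in distinct fibers are adjacent exactly when their images are adjacent in $\mathbb{CAY}(\bar{R})$. Fixing a set-theoretic section of $\pi$ therefore identifies $\mathbb{CAY}(R)$ with the lexicographic product $\mathbb{CAY}(\bar{R})[K_{m}]$, $m=|Nil(R)|$, whose vertex set is $V(\mathbb{CAY}(\bar{R}))\times V(K_m)$ and in which $(g_1,h_1)$ and $(g_2,h_2)$ are adjacent iff $g_1$ and $g_2$ are adjacent in $\mathbb{CAY}(\bar{R})$, or $g_1=g_2$ and $h_1\neq h_2$. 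As $\bar{R}$ is a finite reduced non-local ring, it is a product of at least two finite fields, so Lemma~\ref{connectivityofproductoffields} together with $\kappa\le\delta$ yields $\kappa(\mathbb{CAY}(\bar{R}))=|Z(\bar{R})|-1$.

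It remains to establish the product formula $\kappa(G[K_m])=m\,\kappa(G)$ for a connected non-complete graph $G$, and to apply it with $G=\mathbb{CAY}(\bar{R})$. For the upper bound I would delete all fibers over a minimum cut $A_0$ of $G$; since $G$ is not complete, $G-A_0$ is disconnected with at least two vertices, so the blow-up loses all inter-fiber connections and is disconnected, exhibiting a cut of size $m\,\kappa(G)$. For the lower bound, given any vertex cut $S$ I would set $A=\{g:\text{the whole fiber over }g\text{ lies in }S\}$; because surviving fibers over $G$-adjacent vertices stay completely joined and survivors in one fiber stay mutually adjacent, $G[K_m]-S$ is connected precisely when $G-A$ is, forcing $A$ to be a cut of $G$ and so $|S|\ge m|A|\ge m\,\kappa(G)$ (the degenerate single-vertex cuts have the strictly larger size $m|V(G)|-1$, so they do not affect the minimum). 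Combining these with $|Z(R)|=m\,|Z(\bar{R})|$, which follows from $Z(R)=\pi^{-1}(Z(\bar{R}))$ and the fact that fibers have size $m$, gives $\kappa(\mathbb{CAY}(R))=m(|Z(\bar{R})|-1)=|Z(R)|-|Nil(R)|$. The main obstacle is precisely this lower bound: one must argue that partially-deleted fibers, which remain attached to all their neighbors, never help to separate the graph, so that only the fully-deleted fibers---forming a genuine cut of the base $\mathbb{CAY}(\bar{R})$---can contribute to disconnection.
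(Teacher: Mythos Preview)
Your argument is correct and takes a genuinely different route from the paper. The edge-connectivity portion is identical. For the vertex connectivity, the paper works directly with Menger's theorem: for $x-y\in Nil(R)$ it writes down $|Z(R)|-1$ internally disjoint $x$--$y$ paths through $z+y$ for each $z\in Z^*(R)\setminus\{x-y\}$, and for $x-y\notin Nil(R)$ it takes the $|Z(\bar R)|-1$ disjoint paths in $\mathbb{CAY}(\bar R)$ supplied by Lemma~\ref{connectivityofproductoffields} and translates each of them by every element of $Nil(R)$, producing $|Nil(R)|(|Z(\bar R)|-1)=|Z(R)|-|Nil(R)|$ disjoint paths in $\mathbb{CAY}(R)$; the upper bound is the explicit cut $Z(R)\setminus Nil(R)$.

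You instead observe the structural identity $\mathbb{CAY}(R)\cong \mathbb{CAY}(\bar R)[K_m]$ with $m=|Nil(R)|$ and prove the general formula $\kappa(G[K_m])=m\,\kappa(G)$ for connected non-complete $G$, then plug in $\kappa(\mathbb{CAY}(\bar R))=|Z(\bar R)|-1$ from Lemma~\ref{connectivityofproductoffields}. Your lower-bound argument (fully deleted fibers form a cut of the base) is sound: if $G-A$ is connected one reaches any surviving vertex by walking along a base path using one surviving representative per fiber, and within a fiber the survivors are a clique; the residual case where $G-A$ has a single vertex forces $G[K_m]-S$ to be a clique, hence the one-vertex cut case with $|S|=m|V(G)|-1>m\kappa(G)$. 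What you gain is a clean, reusable lemma about lexicographic products and a shorter proof; what the paper's approach gains is explicit disjoint paths without appealing to any auxiliary product-graph machinery. Both rest on the same essential input, Lemma~\ref{connectivityofproductoffields}.
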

\begin{proof}
{Since $R$ is non-local, by Theorem \ref{atleasttwominimaprimesareconnected}, $\mathbb{CAY}(R)$ is connected. Thus, by Theorem \ref{edge-connectivity-vs-vertex-transitivity} we have  $\kappa'(\mathbb{CAY}(R))=|Z(R)|-1$. Now, we prove that $\kappa(\mathbb{CAY}(R))=|Z(R)|-|Nil(R)|$. 

First, we show that $\kappa(\mathbb{CAY}(R))\geq |Z(R)|-|Nil(R)|$. To see this, we prove that for every two distinct vertices $x$ and $y$ of $\mathbb{CAY}(R)$, there are at least $|Z(R)|-|Nil(R)|$ pairwise (vertex) internally disjoint paths from $x$ to $y$. First suppose that $x-y\in Nil(R)$. Thus, by Lemma \ref{shiftingbynilpotents}, for every $z\in Z^*(R)\setminus \{x-y\}$, $x-\hspace{-.2cm}-z+y-\hspace{-.2cm}-y$ is a path of length $2$ from $x$ to $y$. Since $x$ and $y$ are adjacent, we deduce that there exist at least $|Z(R)|-1\geq |Z(R)|-|Nil(R)|$ pairwise  internally disjoint paths from $x$ to $y$. Now, assume that  $x-y\notin Nil(R)$. Clearly, $\frac{R}{Nil(R)}$ is non-local and by \cite[Theorem 8.7]{ati} (or Chinese Remainder Theorem \cite[Proposition 1.10]{ati}), $\frac{R}{Nil(R)}$ is a finite product of finite fields. Hence, by Lemma \ref{connectivityofproductoffields}, there exist $|Z(\frac{R}{Nil(R)})|-1$ pairwise  internally disjoint paths from $x+Nil(R)$ to $y+Nil(R)$ in $\mathbb{CAY}(\frac{R}{Nil(R)})$. This number is exactly the number of neighbors of $x+Nil(R)$ in $\mathbb{CAY}(\frac{R}{Nil(R)})$. For every neighbor of $x+Nil(R)$ say $x+z+Nil(R)$ let $P(z)$ be the unique  path which contains $x+z+Nil(R)$, where $z+Nil(R)\in Z^*(\frac{R}{Nil(R)})$. Assume that   $P(z)$ has the following vertices.
 $$x+Nil(R)-\hspace{-.2cm}-x_1^{(z)}+Nil(R)-\hspace{-.2cm}-\cdots-\hspace{-.2cm}-x_{k(z)}^{(z)}+Nil(R)-\hspace{-.2cm}-y+Nil(R),$$ where $x_1^{(z)}+Nil(R)=x+z+Nil(R)$. Then by Lemma \ref{zerodivisorsofreducedsareshifted}, for every $m\in Nil(R)$, $$x-\hspace{-.2cm}-x_1^{(z)}+m-\hspace{-.2cm}-\cdots-\hspace{-.2cm}-x_{k(z)}^{(z)}+m-\hspace{-.2cm}-y$$ 
forms a path from $x$ to $y$ in $\mathbb{CAY}(R)$. We denote this path by $P(z)+m$. As $z$ ranges over $Z^*(\frac{R}{Nil(R)})$ and $m$ ranges over $Nil(R)$, the paths $P(z)+m$ generate pairwise  internally disjoint paths from $x$ to $y$ in $\mathbb{CAY}(R)$. Thus, we obtain at least $|Nil(R)|(|Z(\frac{R}{Nil(R)})|-1)$ pairwise  internally disjoint paths from $x$ to $y$ in $\mathbb{CAY}(R)$. By Lemma \ref{zerodivisorsofreducedsareshifted}, we have $|Nil(R)|(|Z(\frac{R}{Nil(R)})|-1)=|Z(R)|-|Nil(R)|$. 
Hence, we obtain at least $|Z(R)|-|Nil(R)|$ pairwise  internally disjoint paths from $x$ to $y$ in $\mathbb{CAY}(R)$. Therefore, $\kappa(\mathbb{CAY}(R))\geq |Z(R)|-|Nil(R)|$. 

Now, we show that $\kappa(\mathbb{CAY}(R))\leq |Z(R)|-|Nil(R)|$. To see this, it suffices to prove that $Z(R)\setminus Nil(R)$ is a vertex cut of $\mathbb{CAY}(R)$. Let $H$ be the graph obtained from $\mathbb{CAY}(R)$ by removing the vertices in $Z(R)\setminus Nil(R)$. Note that the vertices of $H$ consist of $Reg(R)\cup Nil(R)$. Now, Lemma \ref{shiftingbynilpotents} implies that no vertex of $Reg(R)$ has a neighbor in $Nil(R)$ in the graph $H$. Thus, $Nil(R)$ forms a non-empty (complete) connected  component of graph $H$. Since $\mathbb{CAY}(R)$ is connected, we deduce that $Z(R)\setminus Nil(R)$ is a vertex cut. Thus, by Menger's Theorem $\kappa(\mathbb{CAY}(R))\leq |Z(R)|-|Nil(R)|$. The assertion is proved.}
\end{proof}
\begin{cor}
Let $R$ be a finite non-local ring. If $R$ has a residue field isomorphic to $\mathbb{Z}_2$, then $\kappa'\big(T(\Gamma(R))\big)=|Z(R)|-1$ and $\kappa\big(T(\Gamma(R))\big)=|Z(R)|-|Nil(R)|$.
\end{cor}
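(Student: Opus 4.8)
The plan is to reduce the statement to Theorem~\ref{vertex-edge-connect-no} by exhibiting an explicit graph isomorphism $T(\Gamma(R)) \cong \mathbb{CAY}(R)$; connectivity parameters are isomorphism invariants, so the values $\kappa(\mathbb{CAY}(R)) = |Z(R)| - |Nil(R)|$ and $\kappa'(\mathbb{CAY}(R)) = |Z(R)| - 1$ will transfer verbatim. Since $R$ is finite and has a residue field isomorphic to $\mathbb{Z}_2$, there is a maximal ideal $\mathfrak{m}$ with $R/\mathfrak{m} \cong \mathbb{Z}_2$; in particular $\mathfrak{m}$ has index $2$, so $R = \mathfrak{m} \cup (1+\mathfrak{m})$ is a disjoint union of two cosets, and $\mathfrak{m} \subseteq Z(R)$ because in a finite ring the non-units are exactly the zero-divisors. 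Let $\pi \colon R \to R/\mathfrak{m} \cong \mathbb{Z}_2$ be the quotient map. The one structural fact I would record first is that $-1 \equiv 1 \pmod{\mathfrak{m}}$ (as $\pi(-2)=0$), so negation $x \mapsto -x$ carries the coset $1+\mathfrak{m}$ onto itself.

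Next I would define $\varphi \colon R \to R$ by $\varphi(x) = x$ for $x \in \mathfrak{m}$ and $\varphi(x) = -x$ for $x \in 1+\mathfrak{m}$. By the remark above, $\varphi$ maps each of the two cosets bijectively onto itself, so $\varphi$ is an involution and hence a bijection of the common vertex set $R$ of the two graphs. This is the candidate isomorphism.

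The heart of the argument is to verify that $\varphi$ is a graph isomorphism from $T(\Gamma(R))$ to $\mathbb{CAY}(R)$, i.e.\ that for distinct $x,y$ one has $x+y \in Z(R)$ if and only if $\varphi(x)-\varphi(y) \in Z(R)$. I would split into the four cases determined by $(\pi(x),\pi(y))$. When $\pi(x)=\pi(y)$, both $x+y$ and $\varphi(x)-\varphi(y)$ lie in $\mathfrak{m}$: indeed $\pi(x+y)=0$ in each of these two cases, while $\varphi(x)-\varphi(y)=\pm(x-y)$ with $x-y \in \mathfrak{m}$; since $\mathfrak{m} \subseteq Z(R)$, both pairs are adjacent in their respective graphs, so the equivalence holds (both sides true). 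When $\pi(x)\neq\pi(y)$, a direct computation gives $\varphi(x)-\varphi(y)=\pm(x+y)$, and because $Z(R)=-Z(R)$ the condition $\varphi(x)-\varphi(y)\in Z(R)$ is literally equivalent to $x+y\in Z(R)$. Thus adjacency is both preserved and reflected in all four cases, which establishes $T(\Gamma(R)) \cong \mathbb{CAY}(R)$.

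Finally, since $R$ is finite and non-local, Theorem~\ref{vertex-edge-connect-no} applies to $\mathbb{CAY}(R)$ and yields the two stated connectivity values, which then pass to $T(\Gamma(R))$ via $\varphi$. The hypothesis enters only through the index-$2$ maximal ideal and the identity $-1 \equiv 1 \pmod{\mathfrak{m}}$, and this is precisely what allows the sign-flip $\varphi$ to convert the additive adjacency $x+y \in Z(R)$ of the total graph into the difference adjacency of the Cayley graph. I expect the main (though essentially routine) step to be the four-case verification that $\varphi$ preserves adjacency; the genuine insight is the choice of $\varphi$, after which no real obstacle remains.
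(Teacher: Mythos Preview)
Your proof is correct and follows the same overall strategy as the paper: establish that $T(\Gamma(R)) \cong \mathbb{CAY}(R)$ under the given hypothesis, then invoke Theorem~\ref{vertex-edge-connect-no}. The paper does not construct the isomorphism itself; it observes that a finite ring decomposes as a product of finite local rings and then cites \cite[Theorem 5.2(b)]{shekarriz} for the conclusion $T(\Gamma(R)) \cong \mathbb{CAY}(R)$. You instead build the isomorphism explicitly via the sign-flip involution $\varphi$ on the nontrivial coset of the index-$2$ maximal ideal, and your four-case verification is clean and correct (the key ingredients being $2\in\mathfrak{m}$, $\mathfrak{m}\subseteq Z(R)$, and $Z(R)=-Z(R)$). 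Your version is more self-contained and illuminates exactly where the $\mathbb{Z}_2$-residue-field hypothesis enters; the paper's version is shorter but relies on an external reference whose content is essentially what you have written out.
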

\begin{proof}
{Since every finite ring decomposes into a  product of finite local rings (see \cite[Theorem 8.7]{ati}), by Part (b) of \cite[Theorem 5.2]{shekarriz}, we obtain that  $T(\Gamma(R))\cong\mathbb{CAY}(R)$. Now, the assertion follows from Theorem \ref{vertex-edge-connect-no}. }
\end{proof}
\begin{thm}
Let $R$ be a finite non-local ring. Then $\mathbb{CAY}(R)$ is Hamiltonian.
\end{thm}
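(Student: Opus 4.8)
The plan is to recognize $\mathbb{CAY}(R)$ as a Cayley graph of the \emph{finite abelian} group $R^{+}$ and then to appeal to the classical theorem that a connected Cayley graph on a finite abelian group of order at least $3$ always possesses a Hamiltonian cycle (this is a well-known fact from the literature on Hamiltonicity of Cayley graphs, e.g.\ the theorem of Chen and Quimpo). The whole argument therefore reduces to checking that the hypotheses of this theorem are satisfied by $\mathbb{CAY}(R)=Cay(R^{+},Z^{*}(R))$, and no explicit cycle needs to be constructed by hand.

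First I would establish connectedness. Being finite, $R$ is Artinian and hence zero-dimensional; since $R$ is non-local, Theorem \ref{atleasttwominimaprimesareconnected} applies and guarantees that $\mathbb{CAY}(R)$ is connected (indeed of diameter $2$). Recall that connectedness of $Cay(R^{+},Z^{*}(R))$ is precisely the statement that the symmetric connection set $Z^{*}(R)$ generates the group $R^{+}$, which is exactly the generation hypothesis the external theorem requires. Next I would record the trivial order bound: a non-local ring has at least two maximal ideals, so $R$ is not a field and $|R^{+}|\geq 4$, in particular $|R^{+}|\geq 3$. With these two observations in place, $\mathbb{CAY}(R)$ is a connected Cayley graph on a finite abelian group of order $\geq 3$, and the cited theorem delivers a Hamiltonian cycle at once.

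The main point to be careful about is that Hamiltonicity here genuinely relies on the abelian Cayley structure and cannot be extracted from vertex-transitivity alone (Lemma \ref{khavaseebtedaei}(iv)): there exist connected vertex-transitive graphs with no Hamiltonian cycle, so the regularity and vertex-transitivity already recorded for $\mathbb{CAY}(R)$ are by themselves insufficient. Thus the only real content of the proof is the correct invocation of the abelian-group Hamiltonicity theorem after verifying connectedness via Theorem \ref{atleasttwominimaprimesareconnected}; everything else is a routine hypothesis check.
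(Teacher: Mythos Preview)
Your proposal is correct and follows essentially the same route as the paper: verify $|R|\geq 3$, invoke Theorem~\ref{atleasttwominimaprimesareconnected} for connectedness, and then appeal to a known Hamiltonicity result for connected Cayley graphs on finite abelian groups. The paper cites Maru\v{s}i\v{c}'s Corollary~3.2 rather than Chen--Quimpo, but the mathematical content and structure of the argument are the same.
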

\begin{proof}
{Since $R$ is non-local, $|R|\geq 3$. Moreover, by Theorem \ref{atleasttwominimaprimesareconnected}, $\mathbb{CAY}(R)$ is connected. Thus, by \cite[Corollary 3.2]{marusic}, $\mathbb{CAY}(R)$ is Hamiltonian.}
\end{proof}
\vspace{4mm} \noindent{\bf\large 3. The Quotient and the Perfectness of the Cayley Graph of a Ring}\vspace{4mm}\\
In this section, we study the quotient graph of $\mathbb{CAY}(R)$ and  its relation with the quotient ring $\frac{R}{Nil(R)}$. In the end, we characterize zero-dimensional semi-local rings whose Cayley graph is perfect. First, for a graph $G$, we provide some background on the \textit{quotient graph} $G/S$, whose properties are tightly close to those of $G$. Define the relation $\thicksim$ on the vertices of $G$ as follows: $x\thicksim y$ if and only if $N[x]=N[y]$. It is an equivalence relation on the vertices of $G$. Denote the equivalence class of $x$ by $[x]$ and define a simple graph $G/S$ with the vertex set $\{[x]\,:\, x\in G\}$ and two distinct vertices $[x]$ and $[y]$ are adjacent if and only if $x$ and $y$ are adjacent in $G$. This graph is independent of chosen representatives and it is well-defined. For more information we refer the interested reader to \cite{productgraphs}. In the sequel, we would like to determine the equivalence class of each vertex of $\mathbb{CAY}(R)$.
\begin{thm}\label{closedneighborhood}
Let $R$ be a ring and $Z(R)$ be a union of finitely many minimal prime ideals of $R$. Then for every two elements $x$ and $y$ of $R$, the following statements are equivalent:

\noindent {\rm (i)}  $x\thicksim y$,\\
{\rm (ii)} $N[x]=N[y]$,\\
{\rm (iii)} $x-y\in Nil(R)$.
\end{thm}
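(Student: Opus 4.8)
The equivalence of (i) and (ii) is nothing but the definition of the relation $\thicksim$, so the real task is to prove (ii) $\Leftrightarrow$ (iii). The first step I would take is to compute closed neighborhoods explicitly. Since two vertices $u$ and $v$ of $\mathbb{CAY}(R)$ are adjacent precisely when $u-v\in Z^*(R)$, one gets $N[x]=x+Z(R)$ for every $x\in R$. Consequently (ii) becomes the set equality $x+Z(R)=y+Z(R)$, and since translation by $-y$ is a bijection of $R^+$, this is equivalent to $(x-y)+Z(R)=Z(R)$. Writing $a=x-y$, the whole theorem thus reduces to the single claim that $a+Z(R)=Z(R)$ if and only if $a\in Nil(R)$.

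The implication $a\in Nil(R)\Rightarrow a+Z(R)=Z(R)$ is the easy half. Assuming $a\in Nil(R)$, Lemma \ref{shiftingbynilpotents} shows that $z\mapsto z+a$ maps $Z(R)$ into $Z(R)$; since $-a\in Nil(R)$ as well, its inverse $z\mapsto z-a$ does too, so $a+Z(R)=Z(R)$.

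The substance of the argument is the converse, which I would prove by contraposition. Write $Z(R)=\mathfrak{p}_1\cup\cdots\cup\mathfrak{p}_n$ as an irredundant union of minimal primes; by Lemma \ref{minimalozerodivisors} every minimal prime is contained in $Z(R)$, so $\{\mathfrak{p}_1,\ldots,\mathfrak{p}_n\}$ is exactly $\mathrm{Min}(R)$ and hence $Nil(R)=\bigcap_{i=1}^n\mathfrak{p}_i$. Suppose $a\notin Nil(R)$ and, toward a contradiction, that $a+Z(R)=Z(R)$. Since $a=a+0\in a+Z(R)=Z(R)$, the set $A=\{i:a\in\mathfrak{p}_i\}$ is nonempty, while $B=\{i:a\notin\mathfrak{p}_i\}$ is nonempty because $a\notin Nil(R)$. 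The goal is to exhibit a single $z\in Z(R)$ with $a+z\notin Z(R)$, which contradicts $a+Z(R)=Z(R)$.

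To build such a $z$ I would seek an element of $\bigcap_{i\in B}\mathfrak{p}_i\setminus\bigcup_{i\in A}\mathfrak{p}_i$. Such an element exists: were $\bigcap_{i\in B}\mathfrak{p}_i\subseteq\bigcup_{i\in A}\mathfrak{p}_i$, the Prime Avoidance Theorem would give $\bigcap_{i\in B}\mathfrak{p}_i\subseteq\mathfrak{p}_j$ for some $j\in A$, whence $\mathfrak{p}_i\subseteq\mathfrak{p}_j$ for some $i\in B$, contradicting the distinctness and minimality of the primes $\mathfrak{p}_1,\ldots,\mathfrak{p}_n$. For the chosen $z$: since $B\neq\varnothing$ we have $z\in Z(R)$; for $i\in B$ we have $a\notin\mathfrak{p}_i$ and $z\in\mathfrak{p}_i$, so $a+z\notin\mathfrak{p}_i$; and for $i\in A$ we have $a\in\mathfrak{p}_i$ and $z\notin\mathfrak{p}_i$, so again $a+z\notin\mathfrak{p}_i$. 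Thus $a+z$ lies outside every $\mathfrak{p}_i$, i.e. $a+z\notin Z(R)$, the required contradiction. The main obstacle is exactly this simultaneous-avoidance step: arranging one $z\in Z(R)$ so that $a+z$ escapes all the minimal primes at once, for which the key idea is to partition the indices according to membership of $a$ in $\mathfrak{p}_i$ and then apply prime avoidance to the intersection over $B$.
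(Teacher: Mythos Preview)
Your proof is correct and follows essentially the same approach as the paper's: both reduce to the partition $A=\{i:x-y\in\mathfrak{p}_i\}$ versus its complement, use Prime Avoidance to produce $z\in\bigcap_{i\in A^c}\mathfrak{p}_i\setminus\bigcup_{i\in A}\mathfrak{p}_i$, and check that $(x-y)+z$ lies in no $\mathfrak{p}_i$. Your preliminary reformulation $N[x]=x+Z(R)$, reducing (ii) to the single condition $a+Z(R)=Z(R)$, is a clean touch that the paper does only implicitly, but the substantive argument is identical.
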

\begin{proof}
{The equivalence of (i) and (ii) is clear from the definition. Now, we show that (ii) and (iii) are equivalent. By Lemma \ref{shiftingbynilpotents}, (iii) $\Longrightarrow$ (ii). Now, suppose that $N[x]=N[y]$ and $Z(R)=\cup_{i=1}^n{\mathfrak{p}}_i$, where ${\mathfrak{p}}_1,\ldots,{\mathfrak{p}}_n\in {\rm Min}(R)$. We claim that ${\rm Min}(R)=\{{\mathfrak{p}}_1,\ldots,{\mathfrak{p}}_n\}$. Let $\mathfrak{p}\in {\rm Min}(R)$. By Lemma \ref{minimalozerodivisors}, we have $\mathfrak{p}\subseteq Z(R)=\cup_{i=1}^n{\mathfrak{p}}_i$. Therefore, by  Prime Avoidance Theorem (\cite[Theorem 3.61]{sharp}), there exists $1\leq j\leq n$ such that $\mathfrak{p}\subseteq  {\mathfrak{p}}_j$. Since both $\mathfrak{p}$ and $\mathfrak{p}_j$ are minimal prime ideals, we deduce that $\mathfrak{p}={\mathfrak{p}}_j$. Thus, ${\rm Min}(R)\subseteq \{{\mathfrak{p}}_1,\ldots,{\mathfrak{p}}_n\}$  and the claim is proved. 
Let $A=\{i\,:\, x-y\in {\mathfrak{p}}_i\}$. Since $N[x]=N[y]$, $A\neq\emptyset$. If $A^c=\{1,\ldots,n\}\setminus A\neq\emptyset$, by Prime Avoidance Theorem,  there exists $z\in \cap_{i\in A^c}{\mathfrak{p}_i}$ such that $z\notin \cup_{i\in A}{\mathfrak{p}_i}$. Now, we show that $z-x+y$ is a regular element of $R$. If $z-x+y\in Z(R)=\cup_{i=1}^n{\mathfrak{p}}_i$, then for some $1\leq k\leq n$, $z-x+y\in {\mathfrak{p}}_k$. If $k\in A$, then $z\in {\mathfrak{p}}_k$, a contradiction. So, $k\in A^c$. This yields that $x-y\in {\mathfrak{p}}_k$, a contradiction again. Hence, $z-x+y$ is a regular element of $R$. Thus, $z+y$ is adjacent to $x$ in $\overline{\mathbb{CAY}(R)}$. Now, $N[x]=N[y]$ implies $z+y$ is also adjacent to $y$ in $\overline{\mathbb{CAY}(R)}$. Hence, $z$ is a regular element of $R$, a contradiction. Therefore, $A^c=\emptyset$ and so $x-y\in \cap_{i=1}^{n}{\mathfrak{p}}_i$. Since ${\rm Min}(R)=\{{\mathfrak{p}}_1,\ldots,{\mathfrak{p}}_n\}$, we deduce that $x-y\in\cap_{i=1}^{n}{\mathfrak{p}}_i=Nil(R)$. The proof is complete.}
\end{proof}
\begin{remark}\label{zerodivisorsaunioofminimalprimes}
Zero-dimensional semi-local rings, reduced rings with finitely many minimal prime ideals and Noetherian rings such that every associated prime ideal is a minimal prime ideal are examples of rings satisfying the assumptions of Theorem~\ref{closedneighborhood}.
\end{remark}
\begin{cor}\label{modulojacobson}
Let $R$ be a zero-dimensional semi-local ring. Then $\mathbb{CAY}(R)/S\cong\mathbb{CAY}(\frac{R}{J(R)})$.
\end{cor}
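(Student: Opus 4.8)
The plan is to exhibit an explicit graph isomorphism whose underlying vertex map identifies each $\thicksim$-class of $\mathbb{CAY}(R)$ with a coset of $J(R)$. First I would pin down the vertex sets on both sides. Since $R$ is zero-dimensional, every prime ideal of $R$ is maximal, so the intersection of all prime ideals coincides with the intersection of all maximal ideals; that is, $Nil(R)=J(R)$. By Remark~\ref{zerodivisorsaunioofminimalprimes}, a zero-dimensional semi-local ring satisfies the hypotheses of Theorem~\ref{closedneighborhood}, and therefore $x\thicksim y$ if and only if $x-y\in Nil(R)=J(R)$. Consequently the $\thicksim$-classes of $\mathbb{CAY}(R)$ are exactly the cosets of $J(R)$, and the assignment $\varphi([x])=x+J(R)$ is a well-defined bijection from $V(\mathbb{CAY}(R)/S)$ onto $V(\mathbb{CAY}(\frac{R}{J(R)}))=\frac{R}{J(R)}$.

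Next I would verify that $\varphi$ preserves and reflects adjacency. Fix distinct classes $[x]\neq[y]$, equivalently $x-y\notin J(R)$; in particular $x\neq y$ in $R$ and $x+J(R)\neq y+J(R)$ in $\frac{R}{J(R)}$. By the definition of the quotient graph, $[x]$ and $[y]$ are adjacent in $\mathbb{CAY}(R)/S$ exactly when $x$ and $y$ are adjacent in $\mathbb{CAY}(R)$, i.e. when $x-y\in Z(R)$. On the other side, $x+J(R)$ and $y+J(R)$ are adjacent in $\mathbb{CAY}(\frac{R}{J(R)})$ exactly when $(x-y)+J(R)\in Z(\frac{R}{J(R)})$, the difference being nonzero here. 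Thus the entire statement reduces to the equivalence
$$x-y\in Z(R)\iff (x-y)+J(R)\in Z\Big(\tfrac{R}{J(R)}\Big),$$
which is precisely what Lemma~\ref{zerodivisorsofreducedsareshifted} supplies once we invoke $J(R)=Nil(R)$: part~(i) gives the implication $\Longleftarrow$ for any ring, and part~(ii) gives $\Longrightarrow$ because $dim(R)=0$.

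I do not expect a genuine obstacle here, since the two needed ingredients are already in place: Theorem~\ref{closedneighborhood} identifies the closed-neighborhood equivalence classes as cosets of the nilradical, and Lemma~\ref{zerodivisorsofreducedsareshifted} transfers the zero-divisor condition between $R$ and its reduced quotient $\frac{R}{Nil(R)}=\frac{R}{J(R)}$. The only point demanding care is the bookkeeping that turns "adjacent" into the clean algebraic equivalence above: one must check that $[x]\neq[y]$ forces both $x\neq y$ and $x+J(R)\neq y+J(R)$, so that on each side the notion of nonzero zero-divisor collapses to that of zero-divisor. With this verified, $\varphi$ is a bijection that preserves and reflects edges, hence the desired isomorphism $\mathbb{CAY}(R)/S\cong\mathbb{CAY}(\frac{R}{J(R)})$.
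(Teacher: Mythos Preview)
Your proof is correct and follows essentially the same route as the paper: invoke Theorem~\ref{closedneighborhood} (via Remark~\ref{zerodivisorsaunioofminimalprimes}) to identify the $\thicksim$-classes with cosets of $Nil(R)=J(R)$, and then use both directions of Lemma~\ref{zerodivisorsofreducedsareshifted} to match adjacency. The paper's argument is terser, but your explicit justification of $Nil(R)=J(R)$ and the bookkeeping that distinct classes correspond to distinct nonzero differences on both sides are welcome clarifications.
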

\begin{proof}
{By Theorem \ref{closedneighborhood}, for every $x\in R$, $[x]=x+Nil(R)=x+J(R)$. Thus, the vertex set of $\mathbb{CAY}(R)/S$ is $\frac{R}{J(R)}$. Also, note that for every $z\in R$, by Lemma \ref{zerodivisorsofreducedsareshifted}, $z\in Z(R)$ if and only if $z+J(R)\in Z(\frac{R}{J(R)})$ and so two distinct vertices of $\mathbb{CAY}(R)/S$ are adjacent in $\mathbb{CAY}(R)/S$ if and only if they are adjacent in $\mathbb{CAY}(\frac{R}{J(R)})$. The proof is complete.}
\end{proof}

The \textit{Strong Perfect Graph Theorem} states that a finite graph $G$ is perfect if and only if neither $G$ nor $\overline{G}$ contains an induced odd cycle of length at least $5$, see \cite[Theorem 14.18]{bondy}. Hence, it is easy to see that an arbitrary graph $G$ (not necessarily finite) is perfect if and only if neither $G$ nor $\overline{G}$ contains an induced odd cycle of (finite) length at least $5$. Thus, the Strong Perfect Graph Theorem is generalized to infinite graphs. Now, it is clear that a graph $G$ (not necessarily finite) is perfect if and only if $\overline{G}$ is perfect. For the finite case this is known as the \textit{Perfect Graph Theorem} verified by Lov\'{a}sz, see  \cite[Theorem 14.12]{bondy}. In the next theorem, for a zero-dimensional semi-local ring $R$, we study the perfectness of $\mathbb{CAY}(R)$. 
This theorem gives a family of infinite graphs whose every finite induced subgraph has a clique number equal to its chromatic number. 
\begin{thm}
Let $R$ be a zero-dimensional semi-local ring. Then $\mathbb{CAY}(R)$ is perfect if and only if one of the following statements holds:

\noindent {\rm{(i)}} $|{\rm {Max}}(R)|\leq 2$,\\
{\rm{(ii)}} $R$ has a residue field isomorphic to $\mathbb{Z}_2$.
\end{thm}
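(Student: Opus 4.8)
The plan is to reduce to a product of fields and then apply the Perfect Graph Theorem. Since $R$ is zero-dimensional and semi-local it satisfies the hypotheses of Theorem~\ref{closedneighborhood} (Remark~\ref{zerodivisorsaunioofminimalprimes}), so each class of the relation $\thicksim$ is a coset $x+Nil(R)$, consisting of $|Nil(R)|$ mutually adjacent true twins, and by Corollary~\ref{modulojacobson} the quotient graph is $\mathbb{CAY}(\frac{R}{J(R)})$. Thus $\mathbb{CAY}(R)$ arises from $\mathbb{CAY}(\frac{R}{J(R)})$ by substituting each vertex with a clique of true twins. I would then use the standard fact that substituting a vertex of a perfect graph by a clique preserves perfectness, together with the fact that $\mathbb{CAY}(\frac{R}{J(R)})$ embeds as an induced subgraph (one representative per class); hence $\mathbb{CAY}(R)$ is perfect if and only if $\mathbb{CAY}(\frac{R}{J(R)})$ is. As $\frac{R}{J(R)}$ is reduced, zero-dimensional and semi-local, the Chinese Remainder Theorem gives $\frac{R}{J(R)}\cong F_1\times\cdots\times F_k$ with $k=|{\rm Max}(R)|$ and the $F_i$ the residue fields of $R$, so both conditions (i) and (ii) are preserved by this reduction.

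By Corollary~\ref{zerodivisorsofzero-dimensionals}, $\mathbb{CAY}(\frac{R}{J(R)})=\overline{G_{R/J(R)}}$, and a direct check identifies $G_{R/J(R)}$ with the direct product $K_{|F_1|}\times\cdots\times K_{|F_k|}$ (two tuples are adjacent iff they differ in every coordinate); by the Perfect Graph Theorem I may test perfectness on whichever of the two complementary graphs is convenient. For the positive direction, if $k\le 2$ then either the graph is edgeless ($k=1$, a field) or $\mathbb{CAY}(F_1\times F_2)\cong K_{|F_1|}\Box K_{|F_2|}$ is the line graph of the bipartite graph $K_{|F_1|,|F_2|}$, hence perfect. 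If some residue field is $\mathbb{Z}_2$, say $|F_1|=2$, then $G_{R/J(R)}=K_2\times(K_{|F_2|}\times\cdots\times K_{|F_k|})$ is bipartite, since every direct product with $K_2$ is bipartite; being bipartite it is perfect, and therefore its complement $\mathbb{CAY}(\frac{R}{J(R)})$ is perfect.

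For the converse I would assume neither condition holds, so $k\ge 3$ and every $|F_i|\ge 3$, and exhibit an induced $C_5$ (an odd hole) in $\mathbb{CAY}(\frac{R}{J(R)})$. Choosing distinct elements $0,1,t$ in each field, the five tuples whose first three coordinates are $(0,0,0),(0,1,1),(1,1,t),(t,t,t),(t,0,0)$ agree in some coordinate exactly along a five-cycle; I would check the five adjacencies and five non-adjacencies directly. To cover $k>3$ I would fill coordinates $4,\dots,k$ so that the five tuples are pairwise distinct there along every non-edge: since the non-edges of $C_5$ themselves form a $C_5$, this is just a proper $3$-coloring of a $5$-cycle in each extra coordinate, available because $|F_i|\ge 3$; the intended edges persist automatically, as each already agrees in one of the first three coordinates.

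The step I expect to be the main obstacle is the bookkeeping in this last construction: ensuring the induced $C_5$ survives the addition of further coordinates without any non-edge turning into a chord. The observation that makes it work is that a non-edge of $\mathbb{CAY}$ requires disagreement in \emph{every} coordinate, so extending the example reduces cleanly to properly $3$-colouring the complementary $5$-cycle; once this is arranged the remaining verifications are routine.
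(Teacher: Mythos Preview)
Your proof is correct and follows the same opening move as the paper---reducing to $\mathbb{CAY}(R/J(R))$ via Corollary~\ref{modulojacobson} and the observation that perfectness passes between a graph and its quotient by true-twin classes (the paper states and proves this as a separate ``Claim~1'')---but diverges from the paper in how it analyses the product of fields. The paper does not argue directly: it quotes a characterisation of perfect direct products of complete graphs from \cite[Theorem~A.23]{productgraphs}, and spends two auxiliary claims reducing the possibly infinite residue fields to finite ones so that the quoted theorem applies. Your route is self-contained and more elementary: for $k\le 2$ you identify $\mathbb{CAY}(F_1\times F_2)\cong K_{|F_1|}\Box K_{|F_2|}$ as the line graph of a bipartite graph; for a $\mathbb{Z}_2$ residue field you note $K_2\times H$ is bipartite; and for the converse you build an explicit induced $C_5$ (your five triples check out, and the extension to $k>3$ by $3$-colouring the complementary $C_5$ in each extra coordinate is exactly right). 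The trade-off is that the paper's approach generalises immediately to any direct product of complete graphs once the reference is granted, whereas yours avoids the external citation and the finiteness reduction entirely at the cost of a small case analysis.
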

\begin{proof}
{We first consider the two following claims.

{\rm \bf{Claim 1.}} \textit{A graph $G$ is perfect if and only if $G/S$ is perfect}.

To see this, suppose that $G$ is perfect.  Since every induced odd cycle of length at least $5$ in $G/S$ or $\overline{G/S}$ gives an induced odd cycle of length at least $5$ in $G$ or $\overline{G}$, by the Strong Perfect Graph Theorem, we conclude that $G/S$ is perfect. Conversely, suppose that $G/S$ is perfect. By contradiction, suppose that $C$ is an induced odd cycle of length at least $5$ in $G$ or $\overline{G}$. We show that $C$ has no two distinct vertices belonging to the same equivalence class. Let $u$ and $v$ be  two distinct vertices of $C$ such that $u\thicksim v$. Since the length of $C$ is at least $5$, there exists $x\in V(C)$ adjacent to $u$ and not adjacent to $v$, a contradiction. Thus, $C$ has no two distinct vertices belonging to the same equivalence class, i.e. the existence of an induced odd cycle of length at least $5$ in $G$ or $\overline{G}$ implies an induced odd cycle of length at least $5$ in $G/S$ or $\overline{G/S}$, a contradiction. The proof of Claim $1$ is complete.

{\rm \bf{Claim 2.}} \textit{Let $n\geq 2$ be a positive integer, $G_1,\ldots,G_n$ be complete graphs (not necessarily finite) with at least two vertices and $G=G_1\times\cdots\times G_n$. If $H$ is a finite induced subgraph in $G$ or $\overline{G}$, then  there  exists positive integer $m_i$, $m_i\geq 2$, such that $H$ is an induced subgraph  either in  $K_{m_1}\times\cdots \times K_{m_n}$ or in $\overline{K_{m_1}\times\cdots \times K_{m_n}}$, respectively}. 

To see this, assume that $H$ is a finite induced subgraph of $G$. Let $M_i$ be the set of vertices of $G_i$ that appear as the $i$-th component of one vertex of $H$. Note that if $|M_j|=1$, for some $1\leq j\leq n$, then $H$ has no edge. Since $H$ is finite, every $M_i$ is a finite set of vertices of $G_i$. So, $H$ is an induced subgraph of $K_{|M_1|}\times\cdots \times K_{|M_n|}$. Thus, $m_i=|M_i|\geq 2$ are the desired integers. Now assume that $H$ is an induced subgraph of $\overline{G}$. Hence, $\overline{H}$ (the complement is respect to the complete graph on the vertices of  $G$) is a finite induced subgraph of $G$. Therefore, by the previous argument, there exists positive integer $m_i$, $m_i\geq 2$, such that $\overline{H}$ is an induced subgraph of $K_{m_1}\times\cdots \times K_{m_n}$. This implies that $H$  is an induced subgraph of  $\overline{K_{m_1}\times\cdots \times K_{m_n}}$. The proof of Claim 2 is complete. 

{\rm \bf{Claim 3.}} \textit{Let $n\geq 2$ be a positive integer and $G_1,\ldots,G_n$ be complete graphs (not necessarily finite) with at least two vertices. Then $G=G_1\times\cdots\times G_n$ is perfect if and only if either $n=2$ or $n\geq 3$ and $G_i\cong K_2$, for some $i$}.

The proof relies on \cite[Theorem A.23]{productgraphs} which characterizes perfectness of a finite direct product of finite graphs. Here, we are dealing with a finite direct product of possibly infinite graphs. To prove the claim, suppose that $G$ is perfect. If one $G_i$ has exactly two vertices, then the assertion follows. Hence, let $m_i\geq 3$ be a positive integer such that $K_{m_i}$ is a subgraph of $G_i$, for $i=1,\ldots,n$. Thus, $K_{m_1}\times\cdots\times K_{m_n}$, as a subgraph of $G$, is a finite perfect graph. So, by \cite[Theorem A.23]{productgraphs}, $n=2$. Conversely, assume that either $n=2$ or $n\geq 3$ and say $G_1\cong K_2$. Let $C$ be an induced cycle (of finite length) in $G$ or $\overline{G}$. Since $C$ is a finite graph, by Claim 2 there exists positive integer $m_i$, $m_i\geq 2$, such that $C$ is an induced cycle either in  $K_{m_1}\times\cdots \times K_{m_n}$ or in $\overline{K_{m_1}\times\cdots \times K_{m_n}}$, respectively. On the other hand $n=2$ or $n\geq 3$ and $G_1\cong K_2$. So, $n=2$ or $n\geq 3$ and $K_{m_1}\cong K_2$. Hence by applying \cite[Theorem A.23]{productgraphs} to $K_{m_1}\times\cdots \times K_{m_n}$ and noting that finite complete graphs are also complete multipartite graphs, we conclude that $K_{m_1}\times\cdots \times K_{m_n}$ is perfect. Thus, length of $C$ is less than $5$. Hence neither $G$ nor $\overline{G}$ has an induced odd cycle of length at least $5$. Hence, by Strong Perfect Graph Theorem, $G$ is perfect. The proof of Claim 3 is complete. 

Now, we prove the assertion. By Corollary \ref{modulojacobson} and Claim 1, it suffices to prove the assertion for $\mathbb{CAY}(\frac{R}{J(R)})$. By Corollary \ref{zerodivisorsofzero-dimensionals}, it is equivalent to show the assertion for $G_{\frac{R}{J(R)}}$. Let ${\rm {Max}}(R)=\{{\mathfrak{m}}_1,\ldots,{\mathfrak{m}}_n\}$, for some $n\in \mathbb{N}$. If $n=1$, then there is nothing to prove. Hence, assume that $n\geq 2$. Since by Chinese Reminder Theorem (see \cite[Proposition 1.10]{ati}), $G_{\frac{R}{J(R)}}\cong G_{\frac{R}{\mathfrak{m}_1}}\times\cdots\times G_{\frac{R}{\mathfrak{m}_n}}$, Claim 3 implies that $G_{\frac{R}{J(R)}}$ is perfect if and only if either $n=2$ or $n\geq 3$ and $R$ has a residue field isomorphic to $\mathbb{Z}_2$. The proof is complete.}
\end{proof}

By Corollary \ref{zerodivisorsofzero-dimensionals}, the following result extends Theorem 9.5 of \cite{akhtar} to  zero-dimensional semi-local rings.
\begin{cor}
Let $R$ be a zero-dimensional semi-local ring. Then $G_R$ is perfect if and only if one of the following statements holds:

\noindent {\rm{(i)}} $|{\rm {Max}}(R)|\leq 2$,\\
{\rm{(ii)}} $R$ has a residue field isomorphic to $\mathbb{Z}_2$.
\end{cor}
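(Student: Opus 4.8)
The plan is to reduce the statement to the immediately preceding theorem, which already characterizes perfectness of $\mathbb{CAY}(R)$ for a zero-dimensional semi-local ring $R$. The only bridge needed is the identity $\mathbb{CAY}(R)=\overline{G_R}$ together with the fact that the class of perfect graphs is closed under complementation; once these are in place the conclusion for $G_R$ transfers verbatim.

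First I would invoke Corollary \ref{zerodivisorsofzero-dimensionals}. Since $R$ is zero-dimensional, that corollary gives $Reg(R)=U(R)$ and $\mathbb{CAY}(R)=\overline{G_R}$; equivalently, $G_R=\overline{\mathbb{CAY}(R)}$. This is the sole place where the zero-dimensionality hypothesis is used, and it is exactly what identifies $G_R$ as the complement of the graph whose perfectness has just been settled.

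Next I would appeal to the complement-closure of perfectness. As observed in the discussion preceding the previous theorem, the Strong Perfect Graph Theorem extends to arbitrary (possibly infinite) graphs, because the relevant obstruction -- an induced odd cycle of finite length at least $5$ in $G$ or in $\overline{G}$ -- is itself a finite object; consequently a graph $G$ is perfect if and only if $\overline{G}$ is perfect, with no finiteness hypothesis on $G$. Taking $G=\mathbb{CAY}(R)$, it follows that $G_R=\overline{\mathbb{CAY}(R)}$ is perfect precisely when $\mathbb{CAY}(R)$ is perfect.

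Finally I would cite the previous theorem, which asserts that $\mathbb{CAY}(R)$ is perfect if and only if $|{\rm Max}(R)|\leq 2$ or $R$ has a residue field isomorphic to $\mathbb{Z}_2$. Chaining the three equivalences yields the same criterion for $G_R$, completing the argument. I expect essentially no obstacle here: the genuine combinatorial and ring-theoretic content was already expended in proving the perfectness criterion for $\mathbb{CAY}(R)$ (via the reduction to $\mathbb{CAY}(R/J(R))$ and the product-of-complete-graphs analysis), and the only point demanding a moment's care is that the complementation step be legitimate for infinite graphs, which the paper has already secured through the infinite version of the Strong Perfect Graph Theorem.
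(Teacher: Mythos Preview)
Your proposal is correct and matches the paper's approach exactly: the paper introduces this corollary with the single remark ``By Corollary~\ref{zerodivisorsofzero-dimensionals}, the following result extends Theorem~9.5 of \cite{akhtar}\ldots'' and provides no further proof, relying (as you do) on $\mathbb{CAY}(R)=\overline{G_R}$, the complement-closure of perfectness already established in the discussion preceding the previous theorem, and that theorem itself.
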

\vspace{4mm}\noindent{\bf\large 4. The Induced Subgraph on the Regular Elements}\vspace{4mm}\\
Following \cite{totalgraph}, we are interested in studying $Reg(\mathbb{CAY}(R))$. Since the multiplication by an invertible element of $R$ is an automorphism of $Reg(\mathbb{CAY}(R))$, by Corollary \ref{zerodivisorsofzero-dimensionals}, we conclude that for every zero-dimensional ring $R$, $Reg(\mathbb{CAY}(R))$ is a vertex transitive graph. In this section, we determine the clique number and the chromatic number of $Reg(\mathbb{CAY}(R))$.

To study the coloring of $Reg(\mathbb{CAY}(R))$, we need the following theorem in which we deal with rings $R$ whose  set of zero-divisors is a union of finitely many ideals of $R$. A ring $R$ is said to \textit{have few zero-divisors} if $Z(R)$ is a union of finitely many prime ideals. By \cite{glaz}, if $R$ has
few zero-divisors, then any overring of $R$ i.e. any ring between $R$ and $T(R)$,
has few zero-divisors. In particular, by \cite[Corollary 9.36]{sharp}, any overring of a Noetherian ring has few
zero-divisors, which provides a large family of rings of this kind.
\begin{thm}\label{cliqueregular}
Let $R$ be a ring which is not an integral domain. Suppose that $|{\rm Min}(R)|<\infty$ and $Z(R)$ is a union of finitely many ideals of $R$. Then the following statements are equivalent:

\noindent {\rm{(i)}} $R$ is a finite ring,\\
{\rm{(ii)}} $\chi\big(Reg(\mathbb{CAY}(R))\big)$ is finite,\\
{\rm{(iii)}} $\omega\big(Reg(\mathbb{CAY}(R))\big)$ is finite,\\
{\rm{(iv)}} $Reg(\mathbb{CAY}(R))$ has no infinite clique.
\end{thm}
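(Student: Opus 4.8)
My plan is to establish the cycle of implications $(i)\Rightarrow(ii)\Rightarrow(iii)\Rightarrow(iv)\Rightarrow(i)$, only the last of which carries any content. If $R$ is finite then $Reg(\mathbb{CAY}(R))$ is a finite graph, so its chromatic number is finite, giving $(i)\Rightarrow(ii)$; the inequality $\omega(G)\le\chi(G)$ gives $(ii)\Rightarrow(iii)$; and $(iii)\Rightarrow(iv)$ is a tautology, since a finite $\omega$ forbids an infinite clique. Everything therefore reduces to the contrapositive of $(iv)\Rightarrow(i)$: assuming $R$ is infinite, I must produce an infinite clique. The organising observation is that a clique among regular elements is exactly a set $S\subseteq Reg(R)$ with $S-S\subseteq Z(R)$; in particular, if $I$ is \emph{any} ideal with $I\subseteq Z(R)$ and $u$ is regular, then every set of regular elements lying in the single coset $u+I$ is automatically a clique, because any two of its members differ by an element of $I\subseteq Z(R)$. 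So the entire problem becomes: \emph{find a coset of an ideal contained in $Z(R)$ that meets $Reg(R)$ in infinitely many points.}

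First I would dispose of two reductions. If $Nil(R)$ is infinite we are immediately done: by Lemma~\ref{shiftingbynilpotents} every element of $1+Nil(R)$ is regular (as $1\notin Z(R)$), these lie in one coset of the ideal $Nil(R)\subseteq Z(R)$, and hence form an infinite clique. Second, I claim $Z(R)$ must be infinite. Indeed, if $Z(R)$ were finite, then since $R$ is not a domain we may pick $0\ne b$ with $Ann(b)\ne 0$; choosing $0\ne a$ with $ab=0$, every element $xb$ satisfies $(xb)a=0$, so $Rb\subseteq Z(R)$, while the fibres of $x\mapsto xb$ are the cosets of $Ann(b)\subseteq Z(R)$. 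This forces $|R|=[R:Ann(b)]\,|Ann(b)|\le |Z(R)|^{2}<\infty$, a contradiction. Thus $Z(R)=I_1\cup\cdots\cup I_k$ is infinite, so at least one ideal, say $I_{j_0}$, is infinite.

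Now comes the core construction, carried out \emph{inside $R$} (so as to avoid any issue of lifting regularity from $R/Nil(R)$, where embedded primes can enlarge the zero-divisor set, cf. Remark~\ref{zerodivisorsofreductions}). Consider the coset $1+I_{j_0}$. Since $1\notin I_{j_0}$, no element $1+i$ lies in $I_{j_0}$; hence $1+i\in Z(R)$ iff $1+i\in I_l$ for some $l\ne j_0$, i.e. iff $i$ lies in $I_{j_0}\cap(I_l-1)$. Therefore the regular points of the coset are
$$I_{j_0}\setminus\bigcup_{l\ne j_0}\bigl(I_{j_0}\cap(I_l-1)\bigr),$$
and each removed set is either empty or a coset of the subgroup $I_{j_0}\cap I_l$. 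Thus the regular part of $1+I_{j_0}$ is obtained from the infinite group $I_{j_0}$ by deleting at most $k-1$ cosets of the subgroups $I_{j_0}\cap I_l$, and any two of the surviving elements are adjacent. An infinite clique exists as soon as these finitely many cosets fail to cover $I_{j_0}$ up to a finite set.

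The hard part is exactly this last infinitude. I would argue by contradiction: if $1+I_{j_0}$ had only finitely many regular points, then $I_{j_0}$ would be covered, up to a finite set, by finitely many cosets of the subgroups $I_{j_0}\cap I_l$, so by B.\,H.\ Neumann's theorem on coset coverings some $I_{j_0}\cap I_l$ would have finite index in $I_{j_0}$; in particular $I_l$ is again infinite and overlaps $I_{j_0}$ in almost all of it. The plan is to feed this finite-index information back into the decomposition $Z(R)=\bigcup I_l$, using the hypothesis $|{\rm Min}(R)|<\infty$ (via $\bigcup_{\mathfrak p\in{\rm Min}(R)}\mathfrak p\subseteq Z(R)$ from Lemma~\ref{minimalozerodivisors}) to control how the finitely many ideals can mutually cover one another, and to conclude by induction on $k$ that such total near-covering forces $R$ itself to be finite, contradicting our assumption. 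Managing this covering/index bookkeeping — rather than any graph theory, which is entirely packaged into the one-coset observation — is where the real difficulty lies.
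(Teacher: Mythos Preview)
Your trivial implications and the two preliminary reductions (the $1+Nil(R)$ clique when $Nil(R)$ is infinite, and the standard $|R|\le |Z(R)|^2$ bound forcing $Z(R)$ infinite) are fine and match the paper. The genuine gap is exactly where you say the ``real difficulty lies'': you never carry out the Neumann/induction step, and it is not clear it can be made to work with the arbitrary ideals $I_l$. Neumann's theorem gives you only that some $I_{j_0}\cap I_l$ has finite index in $I_{j_0}$, but $I_{j_0}+I_l$ need not sit inside $Z(R)$, so this index information does not obviously propagate, and you never say concretely how the hypothesis $|{\rm Min}(R)|<\infty$ enters. As written this is an outline with the decisive step missing.

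The paper avoids the whole covering/index detour by splitting into two cases. If $R$ is non-reduced, then once $Nil(R)$ is finite you pick any $0\ne x\in Nil(R)$; for an infinite family $\{r_i\}\subseteq Reg(R)$ the values $r_i x$ lie in the finite set $Nil(R)$, so infinitely many coincide, and those $r_i$ form an infinite clique (their pairwise differences annihilate $x$). Hence $Reg(R)$ is finite and $R$ is finite. If $R$ is reduced, the hypothesis $|{\rm Min}(R)|<\infty$ is used \emph{directly}: by Lemma~\ref{minimalozerodivisors}(ii) one may replace your arbitrary $I_l$'s by the minimal primes $\mathfrak{p}_1,\ldots,\mathfrak{p}_n$, so $Z(R)=\bigcup_i\mathfrak{p}_i$. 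For each $\varnothing\ne B\subsetneq\{1,\ldots,n\}$ Prime Avoidance (which needs primality, hence the switch from $I_l$ to $\mathfrak{p}_i$) supplies $z\in\bigcap_{i\in B^c}\mathfrak{p}_i\setminus\bigcup_{i\in B}\mathfrak{p}_i$, and then $x+z$ is regular for every $x\in\bigcap_{i\in B}\mathfrak{p}_i\setminus\bigcup_{i\in B^c}\mathfrak{p}_i$; these shifted elements differ by elements of $\bigcap_{i\in B}\mathfrak{p}_i\subseteq Z(R)$ and hence form a clique. Thus every stratum $Z(B)$ is finite, their union is $Z^*(R)$, and $Z(R)$ is finite --- contradiction. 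The missing idea in your proposal is precisely this ``complementary shift'' $z$, which produces a clique exhausting an entire stratum rather than merely probing one coset.
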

\begin{proof}
{It is clear that (i) $\Longrightarrow$ (ii), (ii) $\Longrightarrow$ (iii) and (iii) $\Longrightarrow$ (iv). We show that (iv) $\Longrightarrow$ (i). First suppose that $R$ is a non-reduced ring. Obviously, $1+Nil(R)\subseteq Reg(R)$ is a clique for $Reg(\mathbb{CAY}(R))$. Since $Reg(\mathbb{CAY}(R))$ has no infinite clique, we deduce that  $1+Nil(R)$ is finite and so $Nil(R)$ is finite. We show that $Reg(R)$ is finite as well. Let $\{r_i\}_{i=1}^{\infty}$ be an infinite subset of $Reg(R)$ and $x$ be a non-zero element of $Nil(R)$. Since $|Nil(R)|<\infty$, we conclude that there exists $A\subseteq \mathbb{N}$ such that $A$ is infinite and for every $i,j\in A$, $r_ix=r_jx$. Thus, $\{r_i\}_{i\in A}$ forms an infinite clique for $Reg(\mathbb{CAY}(R))$, a contradiction. Hence, $Reg(R)$ is finite and so by \cite[Theorem 2]{ak}, $R$ is a finite ring.
Therefore, one may assume that $R$ is a reduced ring. Let ${\rm Min}(R)=\{\mathfrak{p}_1,\ldots,\mathfrak{p}_n\}$. Thus, by Lemma \ref{minimalozerodivisors}, $Z(R)=\bigcup_{i=1}^n \mathfrak{p}_i$. We claim that for every $\varnothing \neq B\subsetneqq \{1,\ldots,n\}$,
$$\left|\bigcap_{i\in B}\mathfrak{p}_i\setminus\bigcup_{i\in B^c}\mathfrak{p}_i\right|<\infty.$$
By the Prime Avoidance Theorem (\cite[Theorem 3.61]{sharp}), $\bigcap_{i\in B^c}\mathfrak{p}_i\setminus\bigcup_{i\in B}\mathfrak{p}_i\neq \varnothing$. Let $z\in\bigcap_{i\in B^c}\mathfrak{p}_i\setminus\bigcup_{i\in B}\mathfrak{p}_i$. Now, we show that for every $x\in\bigcap_{i\in B}\mathfrak{p}_i\setminus\bigcup_{i\in B^c}\mathfrak{p}_i$, $x+z$ is a regular element of $R$. By contradiction, suppose that $x+z\in Z(R)=\bigcup_{i=1}^n \mathfrak{p}_i$. Hence, there exists $k$, $1\leq k\leq n$, such that $x+z\in \mathfrak{p}_k$. If $k\in B$, then $z\in \mathfrak{p}_k$, a contradiction. If $k\in B^c$, then $x\in \mathfrak{p}_k$, a contradiction. Thus, $\{x+z\,|\, x\in\bigcap_{i\in B}\mathfrak{p}_i\setminus\bigcup_{i\in B^c}\mathfrak{p}_i\}$ forms a clique for $Reg(\mathbb{CAY}(R))$. So, the claim is proved.
For $B$, $\varnothing \neq B\subsetneqq \{1,\ldots,n\}$, let $Z(B)=\bigcap_{i\in B}\mathfrak{p}_i\setminus\bigcup_{i\in B^c}\mathfrak{p}_i$. Since
$$Z(R)\setminus\bigcap_{i=1}^n \mathfrak{p}_i=\bigcup_{i=1}^n \mathfrak{p}_i\setminus\bigcap_{i=1}^n \mathfrak{p}_i=\bigcup_{\varnothing \neq B\subsetneqq \{1,\ldots,n\}}Z(B)$$ and $\bigcap_{i=1}^n\mathfrak{p}_i=(0)$ (see \cite[Corollary 3.54]{sharp}), we conclude that $Z(R)$ is finite. Since $R$ is not an integral domain by \cite[Theorem 2.2]{anderson-livingston}, 
we deduce that $R$ is finite and the proof is complete.}
\end{proof}

Note that by \cite[Corollary 9.36]{sharp}, Noetherian rings are among those families of rings satisfying the assumptions of the previous theorem. Also, by Lemma \ref{minimalozerodivisors} and Corollary \ref{zerodivisorsofzero-dimensionals}, reduced rings with finitely many minimal prime ideals and zero-dimensional semi-local rings are other examples of this kind of rings.
\begin{remark}
Let $R=\prod_{i\in \mathbb{N}}\mathbb{Z}_2$. It can be shown that $R$ is a zero-dimensional ring with $Reg(R)=\{1\}$ and contains infinitely many minimal prime ideals.
\end{remark}

Now, for a finite ring $R$, we would like to determine the clique number and the chromatic number of $Reg(\mathbb{CAY}(R))$. Before stating the results, we need the following notation. Let $X$ and $Y$ be two finite sets and $|X|\leq |Y|$. A \textit{Latin rectangle of size} $|X|\times |Y|$ \textit{over} $Y$, denoted by $L^{X,Y}$, is a matrix of size $|X|\times |Y|$ whose entries are in $Y$ and entries in each row and each column are distinct.
Let $(R,\mathfrak{m})$ be a finite local ring, $|\frac{R}{\mathfrak{m}}|=k$ and $\frac{R}{\mathfrak{m}}=\{f_1+\mathfrak{m},\ldots,f_k+\mathfrak{m}\}$. Let $x$ be an arbitrary element of $R$ and $f_i+\mathfrak{m}$ be the unique element of $\{f_1+\mathfrak{m},\ldots,f_k+\mathfrak{m}\}$ equal to $x+\mathfrak{m}$. We denote $f_i+\mathfrak{m}$ by $\pi(x)$ and $x-f_i$ by $\overline{x}$.
\begin{thm}\label{chromaticregulars}
Let  $R=R_1\times\cdots\times R_n$, be a finite ring, where $(R_i,\mathfrak{m}_i)$ is a local ring. If $|\frac{R_1}{\mathfrak{m_1}}|\leq\cdots\leq |\frac{R_n}{\mathfrak{m_n}}|$, then $\omega\big(Reg(\mathbb{CAY}(R))\big)=\chi\big(Reg(\mathbb{CAY}(R))\big)=|\mathfrak{m}_1|(|R_2|-|\mathfrak{m}_2|)\cdots(|R_n|-|\mathfrak{m}_n|).$
\end{thm}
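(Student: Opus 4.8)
The plan is to identify $Reg(\mathbb{CAY}(R))$ as a blow-up of a small ``complement of a tensor product'' graph, reduce the computation of $\omega$ and $\chi$ to that smaller graph, and then settle the smaller graph by an explicit clique partition built from Latin rectangles. First I would unwind the vertex set. Since $R$ is finite it is zero-dimensional, so by Corollary~\ref{zerodivisorsofzero-dimensionals} we have $Reg(R)=U(R)=U(R_1)\times\cdots\times U(R_n)$ with $U(R_i)=R_i\setminus\mathfrak{m}_i$. For $x=(x_1,\dots,x_n)$ and $y=(y_1,\dots,y_n)$ in $Reg(R)$, the difference $x-y$ is a zero-divisor exactly when $x_i-y_i\in\mathfrak{m}_i$ for some $i$; hence $x$ and $y$ are adjacent in $Reg(\mathbb{CAY}(R))$ iff $x\ne y$ and $\pi_i(x_i)=\pi_i(y_i)$ for some $i$, where $\pi_i\colon R_i\to R_i/\mathfrak{m}_i=:F_i$. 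Writing $m_i=|F_i|-1$ and $s=\prod_{i=1}^n|\mathfrak{m}_i|$, the residue map $x\mapsto(\pi_1(x_1),\dots,\pi_n(x_n))\in F_1^{*}\times\cdots\times F_n^{*}$ has all fibres of size $s$, each fibre is a clique, and adjacency between distinct fibres depends only on the residues. This exhibits $Reg(\mathbb{CAY}(R))\cong H[K_s]$, the lexicographic product of $K_s$ over $H$, where $H=\overline{K_{m_1}\times\cdots\times K_{m_n}}$ is the complement of the direct product of complete graphs.

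Next I would pass from $H[K_s]$ to $H$. Using the standard facts $\omega(H[K_s])=s\,\omega(H)$ and $\chi(H[K_s])\le s\,\chi(H)$ together with $\omega\le\chi$, it suffices to prove $\omega(H)=\chi(H)=\prod_{j=2}^{n}m_j$; indeed this yields
$$\omega\big(Reg(\mathbb{CAY}(R))\big)=\chi\big(Reg(\mathbb{CAY}(R))\big)=s\prod_{j=2}^{n}m_j=|\mathfrak{m}_1|\prod_{j=2}^{n}\big(|R_j|-|\mathfrak{m}_j|\big),$$
the last equality holding because $|\mathfrak{m}_j|\,m_j=|R_j|-|\mathfrak{m}_j|$. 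Here $\omega(H)$ is the independence number $\alpha(K_{m_1}\times\cdots\times K_{m_n})$ and $\chi(H)$ is the clique cover number of $K_{m_1}\times\cdots\times K_{m_n}$, so the whole theorem reduces to a single combinatorial statement about the tensor product of complete graphs.

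The heart of the argument, and the step I expect to be the main obstacle, is an explicit clique partition of the vertex set of $K_{m_1}\times\cdots\times K_{m_n}$, where by hypothesis $m_1\le\cdots\le m_n$. A clique of this tensor product is a family of tuples pairwise differing in every coordinate, hence of size at most $m_1$. I would build exactly $\prod_{j=2}^{n}m_j$ cliques of size $m_1$ as follows: for each $j\ge2$ fix an $m_1\times m_j$ Latin rectangle $L^{(j)}$ over $\{1,\dots,m_j\}$ (which exists since $m_1\le m_j$), index the cliques by $(c_2,\dots,c_n)\in\{1,\dots,m_2\}\times\cdots\times\{1,\dots,m_n\}$, and let the clique indexed by $(c_2,\dots,c_n)$ consist of the tuples $\big(t,L^{(2)}_{t,c_2},\dots,L^{(n)}_{t,c_n}\big)$ for $t=1,\dots,m_1$. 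Distinctness of the entries in each column of $L^{(j)}$ makes each such set a clique, while the fact that each row of $L^{(j)}$ is a permutation of $\{1,\dots,m_j\}$ shows every tuple lies in exactly one clique. This single partition simultaneously gives $\alpha(K_{m_1}\times\cdots\times K_{m_n})\le\prod_{j=2}^{n}m_j$ (an independent set meets each clique at most once) and the clique-cover bound $\chi(H)\le\prod_{j=2}^{n}m_j$; the matching lower bound $\omega(H)\ge\prod_{j=2}^{n}m_j$ comes from fixing the first coordinate to a single value, which produces an independent set of the tensor product of exactly that size. Combining these bounds yields $\omega(H)=\chi(H)=\prod_{j=2}^{n}m_j$ and, via the lexicographic-product identities above, completes the proof. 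The delicate points to verify carefully are that the sorted order $m_1\le\cdots\le m_n$ is precisely what guarantees existence of the $m_1\times m_j$ Latin rectangles, and that the clique/partition verifications are correct in the degenerate case where some $m_i=1$.
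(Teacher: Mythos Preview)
Your proof is correct and rests on the same three ingredients as the paper's: the reduction from $R$ to its product of residue fields, a Latin-rectangle construction for the chromatic upper bound, and the explicit clique $(1+\mathfrak{m}_1)\times(R_2\setminus\mathfrak{m}_2)\times\cdots\times(R_n\setminus\mathfrak{m}_n)$ for the lower bound. The packaging differs, however. The paper performs the reduction by an ad~hoc product coloring (its Claim~1) and then colors $Reg(\mathbb{CAY}(F_1\times\cdots\times F_n))$ directly by $(x_1,\ldots,x_n)\mapsto(L^{F_1^*,F_2^*}_{x_1x_2},\ldots,L^{F_1^*,F_n^*}_{x_1x_n})$ (its Claim~2), whereas you first identify $Reg(\mathbb{CAY}(R))$ structurally as the blow-up $H[K_s]$ with $H=\overline{K_{m_1}\times\cdots\times K_{m_n}}$, invoke the general identities $\omega(H[K_s])=s\,\omega(H)$ and $\chi(H[K_s])\le s\,\chi(H)$, and then work dually in $\overline{H}$ by producing a clique partition of the tensor product $K_{m_1}\times\cdots\times K_{m_n}$ rather than a coloring of its complement. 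Your structural identification makes the role of the ordering hypothesis and of the blow-up factor $\prod_i|\mathfrak{m}_i|$ more transparent; the paper's hands-on colorings are slightly more self-contained in that they avoid citing any lexicographic-product lemmas.
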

\begin{proof}
{If $(R,\mathfrak{m})$ is a local ring, then by Corollary \ref{zerodivisorsofzero-dimensionals}, $Reg(\mathbb{CAY}(R)$ is a disjoint union of $|\frac{R}{\mathfrak{m}}|-1$ complete graphs $K_{|\mathfrak{m}|}$ and so $\chi\big(Reg(\mathbb{CAY}(R))\big)=\omega\big(Reg(\mathbb{CAY}(R))\big)=|\mathfrak{m}|$. So, suppose that $n\geq 2$. We make the two following claims:

\noindent {\rm\bf{Claim 1}}. $\chi\big(Reg(\mathbb{CAY}(R))\big)\leq |\mathfrak{m}_{1}|\cdots |\mathfrak{m}_{n}|\chi\big(Reg(\mathbb{CAY}(\frac{R_1}{\mathfrak{m_1}}\times \dots \times \frac{R_n}{\mathfrak{m_n}}))\big).$

Suppose that $\varphi$ is a proper vertex coloring of $Reg\big(\mathbb{CAY}(\frac{R_1}{\mathfrak{m_1}}\times \dots \times \frac{R_n}{\mathfrak{m_n}})\big)$. We define a vertex coloring $f$ of $Reg(\mathbb{CAY}(R))$ as follows:$$f\big((x_1,\ldots,x_n)\big)=\big(\overline{x_1},\ldots,\overline{x_n},\varphi(\pi_1(x_1),\ldots,\pi_n(x_n))\big).$$
Assume that $(x_1,\ldots,x_n)$ and $(y_1,\ldots,y_n)$ are two adjacent vertices with the same color in $Reg(\mathbb{CAY}(R))$. Thus, there exists $i$, $1\leq i\leq n$, such that $x_i-y_i\in \mathfrak{m}_i$. Hence, $\pi_i(x_i)=\pi_i(y_i)$. Since $\varphi\big ((\pi_1(x_1),\cdots,\pi_n(x_n))\big)=\varphi \big((\pi_1(y_1),\ldots,\pi_n(y_n))\big)$, we deduce that $\pi_j(x_j)=\pi_j(y_j)$, for every $j$, $1\leq j\leq n$. This together with $f\big((x_1,\ldots,x_n)\big)=f\big((y_1,\ldots,y_n)\big)$ implies that $(x_1,\ldots,x_n)=(y_1,\ldots,y_n)$. Thus, $f$ is a proper vertex coloring of $Reg(\mathbb{CAY}(R))$ and the claim is proved.

\noindent {\rm \bf{Claim 2}}. \textit{$\chi\big(Reg(\mathbb{CAY}(F_1\times\cdots\times F_n))\big)\leq |F_2^*|\cdots|F_n^*|$, where $n\geq 2$ and $F_i$ is a finite field with $|F_1|\leq \cdots \leq |F_n|$}.

Let $L^{F_1^*,F_i^*}$ be a Latin rectangle of size $|F_1^*|\times |F_i^*|$ over $F_i^*$, for $2\leq i\leq n$. We define a vertex coloring $g$ on $V(Reg(\mathbb{CAY}(F_1\times\cdots\times F_n)))=F_1^*\times\cdots\times F_n^*$ as follows:
$$g(x_1,\ldots,x_n)=(L^{F_1^*,F_2^*}_{x_1x_2},\ldots,L^{F_1^*,F_n^*}_{x_1x_n}),$$ where $L^{F_1^*,F_i^*}_{x_1x_i}$ denotes the $(x_1,x_i)$-entry of $L^{F_1^*,F_i^*}$. Now, suppose that $(x_1,\ldots,x_n)$ and $(y_1,\ldots,y_n)$ are two distinct adjacent vertices with the same color in $Reg(\mathbb{CAY}(F_1\times\cdots\times F_n))$. Hence, there exists $t$, $1\leq t\leq n$, such that $x_t=y_t$. Since $L^{F_1^*,F_i^*}_{x_1x_i}=L^{F_1^*,F_i^*}_{y_1y_i}$,
for every $i$, $2\leq i\leq n$, we deduce that $x_1=y_1$. Therefore, $L^{F_1^*,F_i^*}_{x_1x_i}=L^{F_1^*,F_i^*}_{y_1y_i}$
for every $i$, $2\leq i\leq n$. This implies that $(x_1,\ldots,x_n)=(y_1,\ldots,y_n)$, a contradiction. So, $g$ is a proper vertex coloring of $Reg(\mathbb{CAY}(F_1\times\cdots\times F_n))$ and the claim is proved.

Since $|\frac{R_1}{\mathfrak{m_1}}|\leq\cdots\leq |\frac{R_n}{\mathfrak{m_n}}|$, it follows by these two claims that $\chi\big(Reg(\mathbb{CAY}(R))\big)\leq|\mathfrak{m}_1|(|R_2|-|\mathfrak{m}_2|)\cdots(|R_n|-|\mathfrak{m}_n|)$. Since $(1+\mathfrak{m}_1)\times(R_2\setminus \mathfrak{m}_2)\times\cdots\times (R_n\setminus \mathfrak{m}_n)$ forms a clique for $Reg(\mathbb{CAY}(R))$, we conclude that $\omega\big(Reg(\mathbb{CAY}(R))\big)\geq|\mathfrak{m}_1|(|R_2|-|\mathfrak{m}_2|)\cdots(|R_n|-|\mathfrak{m}_n|)$
and so the assertion is proved.}
\end{proof}
\noindent{\bf Acknowledgements.} The authors are indebted to the School of Mathematics, Institute for
Research in Fundamental Sciences, (IPM), for support. The research
of the second author was in part supported by a grant from IPM (No.
90050212).

{}

\end{document}